\documentclass[12pt, a4paper]{amsart}
\usepackage{amsmath,amssymb,amscd,amsfonts}
\newtheorem{theorem}{Theorem}[section]
\newtheorem{rem}[theorem]{Remark}
\newtheorem{defn}[theorem]{Definition}
\newtheorem{ex}[theorem]{Example}
\newtheorem{lemma}[theorem]{Lemma}

\newtheorem{proposition}[theorem]{Proposition}

\newtheorem{corollary}[theorem]{Corollary}

\newcommand\Q{{\mathbb{Q}}}

\def\det{\mathop{\rm det}\nolimits}

\def\Supp{\mathop{\rm Supp}\nolimits}
\def\tr{\mathop{\rm tr}\nolimits}

\def\Aut{\mathop{\rm Aut}\nolimits}

\def\cO{{\mathcal O}}
\def\cI{{\mathcal I}}

\def\cR{{\mathcal R}}
\def\cG{{\mathcal G}}

\def\cF{{\mathcal F}}
\def\cL{{\mathcal L}}

\def\cM{{\mathcal M}}
\def\cV{{\mathcal V}}

\let\ol=\overline
\let\wt=\widetilde

\begin{document}
\title[]{Orbifold generic semi-positivity: \\
an application to
families of canonically polarized manifolds}

\author{Fr\'ed\'eric Campana, Mihai P\u aun}

\address{Institut Elie Cartan \\
Universit\'e Henri Poincar\'e\\
B. P. 70239, F-54506 Vandoeuvre-l\`es-Nancy Cedex, France\\
et: Institut Universitaire de France}
\email{frederic.campana@univ-lorraine.fr}

\address{Korea Institute for Advanced Study \\
School of Mathematics\\
85 Hoegiro, Dongdaemun-gu, Seoul 130-722, Korea.}
\email{paun@kias.re.kr}

\begin{abstract} 
Let $X$ be a normal projective manifold, equipped with an effective `orbifold'
divisor $\Delta$, such that the pair $(X, \Delta)$ is log-canonical. 
We first define the notion of `orbifold cotangent bundle' $\Omega^1(X, \Delta)$, 
living on any suitable ramified 
cover of $X$. We are then in position to formulate and prove (in a completely different way) an orbifold version of Y. Miyaoka's generic semi-positivity theorem: 
$\Omega^1(X, \Delta)$ is generically semi-positive if $K_X+ \Delta$ is pseudo-effective.
Using the deep results of the LMMP, we immediately get a statement conjectured by E. Viehweg: if $X$ is smooth, and if $\Delta$ is a reduced divisor with simple normal crossings on $X$ such that some tensor power of $\Omega^1(X, \Delta)=\Omega^1_X(Log (\Delta))$ contains the injective image of a big line bundle, then $K_X+ \Delta$ is big. 

This implies, by fundamental results of Viehweg-Zuo, the `Sha\-farevich-Viehweg hyperbolicity conjecture': if an algebraic family of canonically polarized manifolds parametrised by a quasi-projective manifold $B$ has `maximal variation', then $B$ is of log-general type.

\

\end{abstract}

\maketitle

{\bf R\'esum\'e:} Nous d\'efinissons la notion de `fibr\'e cotangent orbifolde' $\Omega^1(X, \Delta)$ pour une paire $(X,\Delta)$ log-canonique: ce fibr\'e est d\'efini sur des rev\^etement cycliques ad\'equats. Nous formulons et d\'emontrons ensuite une version orbifolde du th\'eor\`eme de semi-positivit\'e g\'en\'erique de Y. Miyaoka: $\Omega^1(X, \Delta)$ est g\'en\'eriquement semi-positif si $K_X+\Delta$ est pseudo-effectif. Nous en d\'eduisons, \`a l'aide des r\'esultats r\'ecents du PMML, un \'enonc\'e conjectur\'e par E. Viehweg: si $X$ est lisse, et si $\Delta$ est un diviseur r\'eduit \`a croisements normaux simples sur $X$ tel qu'une puissance tensorielle de $\Omega^1_X(Log(\Delta))$ contienne un fibr\'e en droites `big', alors $K_X+\Delta$ est lui-m\^eme `big'. Les travaux de Viehweg-Zuo impliquent alors la conjecture d'hyperbolicit\'e de V.I. Shafarevich: si une famille alg\'ebrique de vari\'et\'es projectives canoniquement polaris\'ees et param\'etr\'ee par une vari\'et\'e quasi-projective irr\'eductible lisse $B$ a une `variation' maximale, \'egale \`a $dim(B)$, alors $B$ est de type log-g\'en\'eral.

{\bf Mots-cl\'e:} Fibr\'e cotangent orbifolde, semi-positivit\'e g\'en\'erique, vari\'et\'e canoniquement polaris\'ees.

{\bf Keywords:} Orbifold cotangent bundle, generic semi-positivity, canonically polarised manifolds.

{\bf Classification:} 14D22,14E30,14J40,32J25.

{\bf Titre courant:} Orbifold generic semi-positivity

\section{The cotangent sheaf of an orbifold pair}

Let $X$ be a complex projective normal and connected variety of dimension $n= \dim(X)$, with $U\subset X$ a Zariski open non-empty subset contained in the smooth locus of $X$ and such that $X-U$ is of complex codimension at least $2$ in $X$ (we will in general have to shrink $U$ a finite number of times in the course of the proof, and the letter $U$ is reserved for such an appropriately chosen open subset where everything will take place). We denote by $T_U$ 
the tangent bundle of $U$ and by $\Omega^1_U$ its dual, the cotangent bundle. 
The canonical bundle of $U$ is denoted as usual by $K_U:=\det (\Omega^1_U)$.

We consider (using the terminology of \cite{Ca04}) an \emph{orbifold} divisor $\Delta:= \sum_{j=1}^r \delta_j D_j$, where
the coefficients $(\delta_j)_{j=1,..., r}$ are positive rational numbers in the interval $]0,1]$, and the $D_j's$ are irreducible, pairwise distinct hypersurfaces of $X$. We say that an \emph{orbifold pair} $(X, \Delta)$ is smooth if $X$ is smooth, and the support 
$\Supp(\Delta)=\lceil \Delta \rceil=\cup_{j=1}^r \lceil\delta_j\rceil D_j$ has normal crossings.

Such orbifold pairs are usually simply called \emph{pairs} in the LMMP, which considers only the canonical bundle $K_X+\Delta$. The motivation for introducing the 
orbifold pairs in \cite{Ca04} is to encode the multiple fibers of algebraic
fibrations in an orbifold divisor on the base, which amounts to perform a virtual ramified cover of the actual base, with ramification orders equal to the multiplicities of the fibres over the generic point of the components of the orbifold divisor. Base-changing the given fibration y this virtual cover then eliminates the multiple fibres in codimension $1$.

This construction permits to introduce a geometry on the orbifold pairs, related to, but different from, the classical theory of orbifolds. Indeed, in the classical situation we have $\delta_j= 1-\frac{1}{m_j}$,
where the coefficients $m_j\geq 1$ are integers, or $+\infty$, hence $\Delta$ then appears as the ramification divisor of some virtual ramified cover of $X$ branching along $D_j$ with multiplicity $m_j$. For general rational multiplicities, this construction needs a small adaptation.

The orbifold pairs $(X, \Delta)$ interpolate between the \emph{compact case} where, for all $j$, $m_j=1$ 
and the \emph{logarithmic case}, where these are all: $m_j= \infty$, respectively. In both (smooth) cases, we have the notions of tangent bundle, cotangent bundle and more generally, of holomorphic tensors.
\smallskip

Our first aim here is to introduce these notions for an arbitrary orbifold pair $(X, \Delta)$. In contrast to the above two cases however, the corresponding object does not live on $X$ but only on some ramified cover of $X$ as a coherent sheaf of $\cO_X$-modules at least\footnote{Although this object might possibly be defined intrinsically as on $X$ itself by introducing more general structure sheaves, proving our main result requires the consideration of such covers in order to use only the usual structure sheaf $\cO_X$.}. We shall introduce these objects first locally in coordinates, and then globalize them on some (non-canonically defined) ramified cover.

\subsection{Local construction}

\

We first assume that we are working in local coordinates $(x)=(x_1,x_2,...,x_n)$, near a smooth point of $X$ where the support of $\Delta$ is of normal crossings, and contained in the union of the coordinate hyperplanes $D_k$ defined by $x_k=0, k=1,\dots,n$. Such points cover a Zariski open subset of $X$ with complement of codimension at least two  (empty if $(X,\Delta)$ is smooth). We denote with $m_k=\frac{a_k}{b_k}$ the multiplicities in $\Delta$ of the coordinates hyperplanes. Here $a_k,b_k$ are coprime integers with $a_k=b_k=1$ if $m_k=1$ (i.e. if the coefficient $\delta_k=0$), while $a_k=1,b_k=0$ if $m_k=+\infty$ (i.e. if $\delta_k=1$).

In this case the very simple idea idea is that $\Omega^1(X,\Delta)$ should be the locally free $\cO_X$-module generated by the elements $\frac{dx_k}{x_k^{\delta_k}}=x^{1-\delta_k}.\frac{dx_k}{x_k}$, for $k=1,\dots,n$. When $\delta_k=0$, or $1$, we recover the usual `compact' and `purely logarithmic' cases.

However, this construction does not make sense in the frame of classical complex geometry. We thus need to make ramified covers in order to work in this frame.

For each coordinate hyperplane $x_k=0$, write its multiplicity as: $\frac{1}{1-\delta_k}:=m_k=\frac{a_k}{b_k}$, where $a_k,b_k$ are nonnegative coprime integers. If $\delta_k=0$, ie. $m_k=1$, we thus have: $a_k=b_k=1$, while if $\delta_k=1$, ie. $m_k=+\infty$, we have: $a_k=1,b_k=0$. In the other cases we have $a_k>b_k>0$. 

Consider now the following (local near $(0,\dots,0)$) ramified cover: $\pi: Y:=\Bbb C^n\to \Bbb C^n$ given by: $\pi(y_1,\dots,y_n):=(x_1:=y_1^{a_1},\dots,y_n^{a_n})$.
This cover ramifies at order $a_k$ over each of the coordinate hyperplanes $x_k=0$. It thus does not ramify at all over the divisors where $\Delta$ is either $0$, or `purely logarithmic'.

Pulling back our `orbifold' one-forms $\frac{dx_k}{x_k^{\delta_k}}$ by $f$, we get (up to a non-zero constant factor) the holomorphic or logarithmic one-forms $\pi^*(\frac{dx_k}{x_k^{\delta_k}})=y_k^{b_k}\frac{dy_k}{y_k}$. 

Slightly more generally, if we consider a ramified cover defined by $\pi(y_1,\dots,y_n)=(y_1^{g_1.a_1},\dots,y_n^{g_n.a_n})$, with $g_k$ positive integers, we would obtain: $\pi^*(\frac{dx_k}{x_k^{\delta_k}})=y_k^{g_k.b_k}\frac{dy_k}{y_k}$.

\

The following alternative coordinate-free description was suggested to us by Stefan Kebekus: $\pi^*(\Omega^1(X,\Delta))= [\pi^*(\Omega^1_X(\delta_1.D))\cap \Omega^1_Y(Log D')]$ if $D':=\pi^{-1}(D)$, valid near smooth points of $X$ where the support $D$ of $\Delta$ is smooth and defined by $x_1=0$.

\

The dual sheaf $T(X,\Delta)$ will be defined similarly. In the same coordinates, it is `virtually' generated by the elements $x_k^{\delta_k}. \frac{\partial}{\partial x_k}$. On $Y$, they become the dual meromorphic vector fields $y_k^{(1-g_k.b_k)}.\frac{\partial}{\partial y_k}$.

Observe that the sheaves defined in this way do not depend on the choice of coordinates, provided these are `adapted' to $\Delta$.

In this situation, we define the inverse image of the `cotangent sheaf $\Omega^1(X,\Delta)$' by $f$ to be the (locally free) sheaf of $\cO_Y$-modules generated by the elements $\pi^*(\frac{dx_k}{x_k^{\delta_k}})$ just computed. For the rest of this article, we shall denote it by 
$\pi^*(\Omega^1(X,\Delta)).$ We proceed similarly in order to define its dual $\pi^*(T(X,\Delta))$, and more generally, any
tensor sheaf associated to $(X, \Delta)$. 

Notice that no such inverse image sheaf is presently defined at the points of $X$ which are either not smooth, or where the support of $\Delta$ is not of normal crossings. This is indeed not needed, for our purposes (which permit to ignore codimension two subsets). However a (much more involved) definition could be given at these points too, but involving further considerations.

 We shall next globalize this inverse image by considering global ramified covers of $X$. Normal cyclic covers will be sufficient here. We shall briefly explain how smooth Kummer covers can be used to get locally free inverse image sheaves which are everywhere defined by the above formulae, when $(X,\Delta)$ is smooth. Such covers have also be introduced by A. Langer for similar purposes in the surface case (\cite{La}), and also in \cite{JK},\S.2, in the case of integral multiplicities.

\subsection{Global construction}

\

\

 Let $\Delta:=\sum_j\delta_j.D_j$ be an orbifold divisor, with $\delta_j=1-\frac{1}{m_j}, m_j=\frac{a_j}{b_j}$ as above. Let $D_1,\dots,D_m$ be the support of the `finite' part of $\Delta$ (i.e. those $D_j$ such that $0<\delta_j<1$, or equivalently, such that $1<m_j<+\infty$). Let $a$ be the least common multiple of the $a_j, j=1,\dots,m$.
 
  There exists a very ample line bundle $H$ on $X$, and a positive integer $g'$ such that $g'.a.H-(D_1+\dots+D_m)$ has a non-zero section with a reduced zero locus $E$ in codimension one (this can be seen, for example, by applying the same statement to a smooth model $s:X_1\to X$ of $X$, and to the strict transform of $(D_1+\dots+D_m)$ in $X_1$, using the fact that $s^*(H)=H_1+E'$, for $H_1$ ample on $X_1$, and $E'$ an effective $s$-exceptional divisor). 
  
  We consider the normalization $\pi:Y\to X$ of the cyclic cover of $X$ associated to the section $E+ (D_1+\dots+D_m)$ of $g.H, g:=g'.a$, and define $\pi_U^*(\Omega^1(X,\Delta))$ as in the preceding section over the Zariski open subset $U$ of $X$ consisting of the points where $X$ is smooth, and $E+\lceil\Delta\rceil$ is a divisor of normal crossings. This definition makes sense, since $\pi$ ramifies over the generic point of each $D_j, j=1,\dots,m$, to the order $g:=g'.a$, which is divisible by $a_j$. Since this sheaf is defined algebraically over $\pi^{-1}(U)$, it has a coherent extension $(i_U)_*(\pi_U^*(\Omega^1(X,\Delta)))$ to $Y$, denoted\footnote{This ad hoc definition will be sufficient for our present purposes.}  $\pi^*(\Omega^1(X,\Delta))$.
  
  Let $G\cong \Bbb Z_g$ be the Galois group of the covering $\pi$. The sheaf $\pi^*(\Omega^1(X,\Delta))$ is, by construction, invariant under the natural action of $G$ over $U$, which extends to $\pi^*(\Omega^1(X,\Delta))$, by its very definition.

  The dual sheaf $\pi^*(T(X,\Delta))$ is defined similarly, as in the local description above over $U$, and extended to $Y$ by applying $(i_U)_*$ also.
  
  \begin{rem}\label{k} If $(X,\Delta)$ is smooth, and if the support of $\Delta$ is of \emph{simple} normal crossings, we can obtain from \cite{KMM87} (see equally \cite{EV} and th references therein) a (non-cyclic) finite cover $\pi:Y\to X$ with $Y$ smooth, and a branching divisor $B+(D_1+\dots+D_m)$ on $X$ which is of simple normal crossings by using a composition of such cyclic covers, one for each the the $D_j, j=1,\dots,m$. In this case, $U=X$, so no extension $(i_U)_*$ is needed, and $\pi^*(\Omega^1(X,\Delta))$ is a locally free sheaf on $Y$, inductively generated by the explicit elements given in coordinates in the preceding section. \end{rem}

\begin{defn} Let $Y$ be a normal and connected complex projective variety, and 
let $G\subset \Aut(Y)$ be a finite group of automorphisms of $Y$. Let $U$ be a $G$-invariant Zariski open subset contained in the smooth locus of $Y$, and $\cF_U\subset\cM(T_Y)$ be a coherent subsheaf of the sheaf of
meromorphic vector fields on $U$, such that $\cF=(i_U)_*(\cF_U)$ is a coherent sheaf of $\cO_Y$-modules.

Then we say that $\cF$ is $G$-invariant if for each open set $V\subset Y$, the 
differential of each element $h\in G$ induces over $U$ an isomorphism between 
the space of sections of $\displaystyle \cF|_V$ and the space of sections of $\displaystyle \cF|_{h(V)}$. This action then extends to $\cF$ over all of $Y$.
\end{defn}

 We will need the following fact, which is (likely) well-known in different contexts\footnote{In particular, it holds true for $\pi^*(\cG)$, $\cG$ any coherent sheaf $\cG$ on $X$, not only for $\cG=T_X$.}.

\begin{lemma} Let $\pi:Y\to X$ be the preceding cyclic cover defined above, with Galois group $G\cong \Bbb Z_g$. Let $\cF\subset \pi^*( T_X)$ be a $G$-invariant coherent $\displaystyle \cO_{Y}$-module, which is saturated inside the 
inverse image $\pi^*(T_X)$ of the tangent sheaf $T_X$. Then $\cF= \pi^* (\cF_X)$ for some coherent sheaf $\cF_X$ of $\cO_X$-modules on $X$.
\end{lemma}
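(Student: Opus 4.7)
The natural candidate for the descent is $\cF_X$, a coherent subsheaf of $T_X$ defined as the saturation in $T_X$ of $T_X\cap (\pi_*\cF)^G$, where the intersection is taken inside $(\pi_*\pi^*T_X)^G$ via the unit map $T_X\hookrightarrow (\pi_*\pi^*T_X)^G$. The $G$-invariant direct image $(\pi_*\cF)^G$ is coherent, since $\pi$ is finite and $|G|=g$ is invertible; the intersection, and then its saturation in the reflexive sheaf $T_X$ on the normal variety $X$, are coherent $\cO_X$-subsheaves of $T_X$. Let $U\subset X$ be the open locus where $X$ is smooth and $\pi$ is \'etale (complement of codimension $\geq 1$). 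Over $U$ the action of $G$ on $\pi^{-1}(U)$ is free, so by classical Galois descent the unit map is an isomorphism $T_U \xrightarrow{\sim} (\pi_*\pi^*T_X|_U)^G$, and $\cF_X|_U$ coincides with the unique coherent subsheaf $\cF_U\subset T_U$ satisfying $\pi^*\cF_U = \cF|_{\pi^{-1}(U)}$.

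It remains to verify $\pi^*\cF_X=\cF$ on all of $Y$. By construction, both are coherent subsheaves of $\pi^*T_X$ that coincide on $\pi^{-1}(U)$, whose complement in $Y$ consists of the ramification divisor of $\pi$ (codimension one) together with the preimages of the codimension-$\geq 2$ singular locus of $X$. Since $\cF$ is saturated in $\pi^*T_X$ and $\pi^*T_X$ is torsion-free, the codimension-$\geq 2$ part is automatic, and one reduces to verifying equality at each generic point $\eta$ of a ramification divisor in $Y$.

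At such an $\eta$, the local ring extension $\cO_{X,\pi(\eta)}\hookrightarrow \cO_{Y,\eta}=:R$ is a finite torsion-free, hence flat, extension of DVRs. Flatness implies that the saturated inclusion $\cF_X\subset T_X$ pulls back to a saturated inclusion $\pi^*\cF_X\subset \pi^*T_X$ near $\eta$ (torsion-free cokernels are preserved by flat base change). Thus $\cF_\eta$ and $(\pi^*\cF_X)_\eta$ are two saturated $R$-submodules of the free $R$-module $(\pi^*T_X)_\eta$ having the same generic image, so they must coincide. The main obstacle is precisely this codimension-one argument at the ramification divisor: the adjunction counit $\pi^*\pi_*\cF\to\cF$ is \emph{not} an isomorphism there, so equality relies essentially on the hypothesis that $\cF$ is saturated inside $\pi^*T_X$, not merely $G$-invariant; without saturation one could have $\pi^*\cF_X\subsetneq \cF$ at ramification points.
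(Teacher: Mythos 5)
Your proof is correct in substance and takes a genuinely different route from the paper.  The paper works entirely in local coordinates near a generic point of the branch divisor: writing a local section $V$ of $\cF$ as $V=\sum_{k=0}^{g-1}t^k\pi^*(v_k)$ with $v_k$ sections of $T_X$, it averages over $G$ to extract $\pi^*(v_0)\in\cF$, then uses the saturation hypothesis to divide by $t$ and iterate, concluding that $\cF$ is locally generated by $G$-invariant sections $\pi^*(v)$.  You instead construct the descended sheaf $\cF_X$ abstractly (as the saturation in $T_X$ of $T_X\cap(\pi_*\cF)^G$), identify it with the Galois-descent sheaf over the \'etale locus, and then combine two formal facts: a saturated submodule of a free module over a DVR pulls back under a flat extension of DVRs to a saturated submodule, and two saturated subsheaves of a torsion-free sheaf that coincide at the generic point coincide everywhere on the locus where both are saturated.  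The two arguments have the same footprint (one only sees what happens at generic points of the branch divisor, codimension-$\geq 2$ phenomena being invisible, which the paper itself allows for), but the paper's computation is self-contained and elementary while yours is shorter and isolates exactly which general principles are at work.  One small inaccuracy in your closing remark: with $\cF_X$ defined \emph{as a saturation in $T_X$}, dropping the saturation hypothesis on $\cF$ produces $\pi^*\cF_X\supsetneq\cF$ at the ramification divisor (take $\cF=t^a\pi^*T_X$ with $0<a<g$, which gives $\cF_X=T_X$), not $\subsetneq$; the inclusion $\pi^*\cF_X\subsetneq\cF$ you mention would occur for the unsaturated candidate $T_X\cap(\pi_*\cF)^G$.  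Either way your main point stands: saturation is the hypothesis that makes descent across the ramification locus possible.
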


\begin{proof} It will be sufficient to construct $\cF_X$ over a Zariski open subset with complement of codimension at least two, and to consider its extension to $X$. We shall thus consider a smooth point $x_0$ of $X$ where the support of $\Delta$ is smooth, and thus consists of a single $D_1$ of local equation $x_1=0$. If $y_0$ is a point of $Y$ lying over $x_0$, in suitable coordinates, $\pi:Y\to X$ is given near $y_0$ by: $$(t, y_2,\dots, y_n)\to (t^g, y_2,\dots, y_n)$$
and the action of the generator $h\in G$ is given by the multiplication of the coordinate $t$ by a primitive $g$-th root $\mu$ of unity. 

We shall show that, locally, $\cF$ is generated as a $\cO_Y$-module, by $G$-invariant sections, which are thus lifts of sections of $T_X$. And $\cF_X$ will be locally generated as a $\cO_X$-module, by these sections.

Let $V$ be a local section of $\cF$ defined in a neighborhood of $x_0$. Then $V= \sum_{k=0}^{g-1}t^k\pi^*(v_k)$
for local sections $v_k$ of the sheaf $T_X$, since
$\cF\subset \pi^*(T_X)$. Since $\sum_{k=0}^{g}\mu^{jk}=0$ if $j$ is not divisible by $g$, and $h^*(t)=\mu.t$, we have:

$$\pi^*(v_0)= \frac{1}{g}.\sum_{p=0}^{g}(h^*)^p(V)$$
So, $\cF$ being $G$-invariant, we get: $\pi^*(v_0)\in \cF$.Thus: $(V-\pi^*(v_0))=t.V_1,$ with $V_1:=\sum_{k=0}^{g-2}t^{k}\pi^*(v_{k+1})$.
By our saturation assumption, $V_1$ is a section of $\cF$, since $t.V_1$ is a section of $\cF$, and $V_1$ is a section of $\pi^*(T_X)$. for $k=1,\dots,(g-1)$. By induction on $k$, we get that $\pi^*(v_k)$ is a section of $\cF$, for $k=0,1,\dots,g$.
Thus $\cF$ is generated as an $\cO_Y$-module by elements of the form $\pi^*(v)$, for $v$ local sections of $T_X$ \end{proof}

\begin{rem} If $\pi:Y\to X$ is a composition of cyclic covers, the above argument can be also applied inductively. In particular, the conclusion holds in the situation of remark \ref{k}.
\end{rem}

\begin{rem} \label{degree}The tangent and cotangent sheaf associated to $(X, \Delta)$ are clearly 
invariant by the group $G$ acting on $X$. Also, one has the inclusion of sheaves $\pi^*( T(X, \Delta))\subset f^*(T_X)$ over the Zariski open subset $U\subset X$ consisting of smooth points of $X$ where $Supp(\Delta)$ is smooth. A similar fact holds for the cotangent sheaves (with a reversed inclusion). Moreover, we have, for any projective irreducible curve $C'\subset f^{-1}(U)$ which meets transversally each component of $\pi^{-1}(Supp(\Delta))$ the exact sequence:
$$0\to f^*\Omega^1(X)_{\vert C'}\to f^*\Omega^1(X, \Delta)_{\vert C'}\to f^* \cO(\Delta)_{\vert C'}\to 0$$
on $C$; this shows in particular that the degree of $\pi^*\Omega^1(X, \Delta)$ on any curve cohomologous with the 
class $\pi^* (H)^{n-1}$, for $H$ ample on $X$, is equal to $g.(K_X+ \Delta)\cdot H^{n-1}$, since the complement of $U$ is of codimension of least $2$ in $X$.
\end{rem}

\begin{rem}  It is immediate to see that the inverse image by $\pi$ of any section of $S^{[r]}\Omega^1{(X,\Delta)}$, as defined in \cite{Ca07}, over an open subset $V\subset U$ defines a $G$-invariant section of 
$\otimes^r\pi^*\Omega^1(X,\Delta)$  over $\pi^{-1}(V)$.

\end{rem}

\subsection{Notion of orbifold generic semi-positivity}

\begin{defn}\label{defgsp} We consider the data $(X,\Delta), H, f, Y$ as above, with $\pi:Y\to X$ a cyclic cover adapted to our situation, constructed as in the beginning of \S.1.2. We shall say that $\Omega^1(X,\Delta)$ is $\pi$-generically semi positive (gsp in abbreviated form) if for any polarization $B$ on $X$, the sheaf $\pi^*\Omega^1(X,\Delta)$ defined above 
is generically semi-positive with respect to $\pi^*(B)$ in the usual sense. The latter condition means 
that any quotient subsheaf $\cG$ of $\pi^*\Omega^1(X,\Delta)$ has nonnegative degree on $(\pi^*(B))^{n-1}$.

\end{defn}

\begin{rem} This notion depends only on Zariski open subsets $U$ with complements of codimension at least $2$ in $X$ (which is the reason why we did not need to have a refined definition of $\pi^*(\Omega^1(X,\Delta))$ over the complement of such a $U$). 

We shall also see later (see remark \ref{rwu}.(2) below) that this notion of \emph{generic semi-positivity} for orbifold cotangent bundles does not depend on the choice of covers chosen for its definition. For the time being, we shall check this in the following special case, used crucially in the proof of theorem \ref{tgsp}.
\end{rem}

We shall consider the following data.

1. Let $(X,\Delta)$ be an orbifold pair, with $X$ normal and projective, and $\pi:Y\to X$ will be a cyclic cover of degree $g$ associated to $\Delta$ as above.

2. Let $f:X\dasharrow Z$, $Z$ normal, be a rational fibration. We denote by $U_f\subset X$ the Zariski open set with complement of codimension at least $2$ in $X$ consisting of smooth points $x$ of $X$ at which the support of $\Delta$ is smooth (or empty), and such that the map $f$ is holomorphic at $x$, with fibre having a smooth reduction. By blowing-up suitably $X$ and $Z$, we may and shall assume $f$ to be holomorphic and `neat' in the sense of \cite{Ca04}, see definition \ref{dneat} below. In this process, $U_f$ thus remains unchanged, if we restricted it so as to avoid the indeterminacy locus of this `neat' model of $f$. The image of $U_f$ (still restricting it with complement of codimension at least $2)$ may and shall be assumed to be contained in the smooth locus $Z^{reg}$ of $Z$, since the $f$-exceptional divisors of $X$ are also (by `neatness') contained in the exceptional divisor of the modification of our `initial' $X$. The sheaf $\Omega^1_X/f^*(\Omega^1_Z)$ is thus well-defined in the usual sense over $U_f$.

3. Let $C'\subset Y$ will be a generic member of the algebraic family of complete intersections $\pi^*(m.B)^{n-1}, m$ sufficiently large: $C'$ thus a projective smooth connected curve contained in $\pi^{-1}(U_f)$ meeting transversally each component of $\pi^{-1}(Supp(\Delta))$. $C'$ also meets transversally each of the finitely many irreducible divisors $F_k$ of $X, k=1,\dots,r$ such that $f(F_k)$ is a divisor of $Z$, with multiplicity of $f$ along $F_k\cap U_f$ equal to some $t_k\geq 2$. Let $C$ be the normalisation of its image in $X$.

4. Let $\Delta^{\rm hor}$ be the union of the components of $\Delta$ which meet $U_f$ and are mapped by $f$ onto $Z$, each affected with the same coefficient it has in $\Delta$.

\begin{proposition}\label{deg} In this situation, let $\cF_X:=f^*(\Omega^1_Z)\subset \Omega^1_X$: this is a well-defined coherent sheaf on $U_f$. Let $\cF^{\Delta}\subset \pi^*(\Omega^1(X,\Delta))$ be the saturation of $\pi^*(\cF_X)$ in $\pi^*(\Omega^1(X,\Delta))$. Let $Q_{f,\Delta}$ be the quotient sheaf $\pi^*(\Omega^1(X,\Delta))/\cF^{\Delta}$.

Then: $\frac{1}{g.m^{n-1}}.deg_{C'}(Q_{f,\Delta})=(K_{X/Z}+\Delta).C-[\sum_{k=1}^{k=r} (t_{F_k}-\frac{1}{m_{\Delta}(F_k)}).F_k].C$, where $m_{\Delta}(F_k)\geq 1$ is the multiplicity of $F_k$ in $\Delta$. This equality can also be written as: $(\frac{1}{g.m^{n-1}}).deg_{C'}(Q_{f,\Delta})=[K_{X/Z}+\Delta-D(f,\Delta)].H^{n-1}$, if $D(f,\Delta):=[\sum_{k=1}^{k=r} (t_{F_k}-\frac{1}{m_{\Delta}(F_k)}).F_k]$.

\end{proposition}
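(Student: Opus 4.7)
The plan is to compute $\det Q_{f,\Delta}$ as a divisor class on $Y$ via the defining short exact sequence
\[
0\longrightarrow \cF^\Delta \longrightarrow \pi^*(\Omega^1(X,\Delta))\longrightarrow Q_{f,\Delta}\longrightarrow 0
\]
and then to evaluate its degree on $C'$ using the projection formula. Taking determinants gives $\det Q_{f,\Delta}\cong \det\pi^*(\Omega^1(X,\Delta))\otimes(\det\cF^\Delta)^{-1}$, and by Remark~\ref{degree} the first factor is $\pi^*(K_X+\Delta)$, so the core of the proof is the computation of $\det\cF^\Delta$ as a divisor class on $Y$. Since $\pi_*(C')=g\,m^{n-1}\,B^{n-1}$ as a $1$-cycle on $X$, the two stated forms of the equality are equivalent up to dividing by $g$ and by $m^{n-1}$.

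Because $\cF^\Delta$ coincides with $\pi^*(f^*\Omega^1_Z)$ outside $\pi^{-1}(\Supp(\Delta)\cup\bigcup_k F_k)$, the difference $\det\cF^\Delta-\pi^*(f^*K_Z)$ is supported on that locus and can be read off from a codimension-one local analysis using the explicit coordinate models of~\S1.1 and~\S1.2. Three cases cover the generic points on $\pi^{-1}(U_f)$: (i) at a generic point of a horizontal $D_j\subset\Delta^{hor}$ the map $f$ is smooth and one checks that $\pi^*(f^*\Omega^1_Z)$ is already saturated in $\pi^*(\Omega^1(X,\Delta))$, giving no contribution; (ii) at a generic point of a vertical $F_k\not\subset\Supp(\Delta)$ the cover $\pi$ is \'etale (choosing the auxiliary divisor from~\S1.2 generically to avoid $F_k$), and saturating $\langle t_{F_k}\,x_1^{t_{F_k}-1}\,dx_1\rangle$ inside $\Omega^1_X$ kills a torsion cokernel of length $t_{F_k}-1$, contributing $(t_{F_k}-1)\pi^*F_k=(t_{F_k}-1/m_\Delta(F_k))\pi^*F_k$ since $m_\Delta(F_k)=1$; (iii) at a generic point of a vertical $F_k\subset\Supp(\Delta)$ with $m:=m_\Delta(F_k)>1$, the cover $\pi$ ramifies to order $g$ along $F_k$, and the local generators of $\pi^*(\Omega^1(X,\Delta))$ and of $\pi^*(f^*\Omega^1_Z)$ in the direction normal to $F_k$ are respectively $y_1^{g/m-1}\,dy_1$ and $y_1^{g\,t_{F_k}-1}\,dy_1$ (up to units), so the saturation absorbs a torsion cokernel of length $g(t_{F_k}-1/m)$ and contributes $(t_{F_k}-1/m_\Delta(F_k))\pi^*F_k$.

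Summing these contributions and combining with $\det\pi^*(\Omega^1(X,\Delta))=\pi^*(K_X+\Delta)$ yields
\[
\det Q_{f,\Delta}=\pi^*\bigl(K_{X/Z}+\Delta^{hor}-D(f,\Delta)\bigr),
\]
the vertical orbifold contributions of $\Delta$ on the $F_k$ being absorbed into the $1/m_\Delta(F_k)$-corrections through the identity $\delta_{F_k}=1-1/m_\Delta(F_k)$. Applying the projection formula $\deg_{C'}(\pi^*\alpha)=\alpha\cdot\pi_*(C')=g\,m^{n-1}\,\alpha\cdot B^{n-1}$ and dividing by $g$ (respectively by $g\,m^{n-1}$) produces the two stated forms. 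The main technical obstacle is case~(iii), where the orbifold ramification of $\pi$ (imposed by $F_k\subset\Supp(\Delta)$) and the non-reduced fiber multiplicity $t_{F_k}$ of $f$ combine to give the mixed exponent $g(t_{F_k}-1/m)$ in the saturation; verifying that cases~(ii) and~(iii) produce the same uniform coefficient $t_{F_k}-1/m_\Delta(F_k)$ after the $\Delta^{hor}$-versus-$\Delta$ book-keeping is the key step.
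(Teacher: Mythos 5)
Your plan is the right one and is in substance the same as the paper's: compute the generic length of the torsion $\cF^\Delta/\pi^*(\cF_X)$ in adapted local coordinates along each relevant divisor, feed the result into the determinant of the short exact sequence $0\to\cF^\Delta\to\pi^*\Omega^1(X,\Delta)\to Q_{f,\Delta}\to 0$, and then push down by the projection formula. Your three local models are computed correctly and agree with the paper's single representative computation (where the local length along a vertical divisor is $g\,t-1-\bigl(g(1-\delta)-1\bigr)=g\bigl(t-\tfrac{1}{m}\bigr)$); case (i), that horizontal components of $\Delta$ contribute nothing, is correct because there $\pi^*(f^*\Omega^1_Z)$ is a direct summand of $\pi^*\Omega^1(X,\Delta)$.

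However, the final assembly has a genuine bookkeeping gap. From your own cases (i)--(iii), summing over the vertical divisors gives
$\det\cF^\Delta=\pi^*\bigl(f^*K_Z+\sum_F(t_F-\tfrac{1}{m_\Delta(F)})\,F\bigr)$, so combined with $\det\pi^*\Omega^1(X,\Delta)=\pi^*(K_X+\Delta)$ you get
$\det Q_{f,\Delta}=\pi^*\bigl(K_{X/Z}+\Delta-D(f,\Delta)\bigr)$, with the \emph{full} $\Delta$ and \emph{not} $\Delta^{\rm hor}$. The sentence ``the vertical orbifold contributions \ldots being absorbed into the $1/m_\Delta(F_k)$--corrections through $\delta_{F_k}=1-1/m_\Delta(F_k)$'' does not produce the claimed identity: that identity gives $\Delta^{\rm vert}-D(f,\Delta)=-D(f,0)$, hence $\Delta-D(f,\Delta)=\Delta^{\rm hor}-D(f,0)$, which differs from $\Delta^{\rm hor}-D(f,\Delta)$ by $\Delta^{\rm vert}$. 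So your derivation actually proves $\det Q_{f,\Delta}=\pi^*\bigl(K_{X/Z}+\Delta^{\rm hor}-D(f,0)\bigr)$, not $\pi^*\bigl(K_{X/Z}+\Delta^{\rm hor}-D(f,\Delta)\bigr)$. (A quick sanity check: for $X=\mathbb P^1\times\mathbb P^1$, $f$ a projection, $\Delta=\tfrac12 X_0$ a half--fibre with $t_{X_0}=1$, one finds $Q_{f,\Delta}\cong\pi^*\Omega^1_{X/Z}$, so $\det Q_{f,\Delta}=\pi^*K_{X/Z}$, matching $K_{X/Z}+\Delta-D(f,\Delta)$ but not $K_{X/Z}+\Delta^{\rm hor}-D(f,\Delta)=K_{X/Z}-\tfrac12 X_0$.) To be fair, this same inconsistency already appears in the paper: the asserted identity ``$K_{X/Y}+\Delta^{\rm hor}-D(f,\Delta)=K_{X/Y}+\Delta-D(f,0)$'' in Theorem~\ref{corcb} is false as written, while the correct identity is $K_{X/Y}+\Delta-D(f,\Delta)=K_{X/Y}+\Delta^{\rm hor}-D(f,0)$; so the $\Delta^{\rm hor}$ in the statement of Proposition~\ref{deg} appears to be a typo for $\Delta$. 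Nonetheless, a proof should not paper over the discrepancy with a one-line ``absorption'' claim---one should state the identity one actually gets. Two smaller points: (a) your case (iii) assumes $m_\Delta(F_k)<+\infty$ (so that $\pi$ ramifies to order $g$); when $m_\Delta(F_k)=+\infty$ the cover $\pi$ is \'etale over $F_k$, the local generator of $\pi^*\Omega^1(X,\Delta)$ is $dy_1/y_1$, the length is $t_{F_k}$, and the contribution $(t_{F_k}-\tfrac{1}{m})\pi^*F_k$ still holds with $1/\infty=0$---this case should be noted separately; (b) the genericity of the auxiliary divisor $Z$ ensuring it avoids the finitely many $F_k$ deserves at least a word, which you do give.
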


\begin{proof}  Before starting the proof, let us notice an ambiguity in the notations: the symbol $f^*(\Omega^1_Z)$ denotes the composition $(df)\circ f^*(\Omega^1_Z)$, where $f^*$ is just the inverse image sheaf on $X$, while $df$ is the differential mapping $f^*(\Omega^1_Z)$ into $\Omega^1_X$. By contrast, $\pi^*(\cF_X)$ is just the inverse image sheaf on $Y$, not composed with the differential $d\pi$. Thus, in particular, the ramification of $\pi$ along the divisor $\pi^{-1}(E)$ is not taken into account in the computation below, where $E$ is the codimension one set defined in \S1.2.

The quotient $\cF^{\Delta}/\pi^*(\cF_X)$ is a skyscraper sheaf concentrated on the union of the support of $\Delta$, and of the $F_k$ (this over $U_f$, at least), and: $\pi^*(K_{X/Z}+\Delta).C'-det_{C'}(Q_{f,\Delta}).C'$ is equal the length of this skyscraper sheaf over $C'$. We are thus reduced to the local computation of this length at an arbitrary point $y_0\in C'$. By the transversality assumption, we may assume that we have local coordinates $y:=(y_1,\dots,y_n)$ and $x:=(x_1,\dots,x_n)$ near $y_0$ and $x_0:=\pi(y_0)$ respectively such that, in these coordinates: $\pi(y)=(y_1^g,y_2,\dots,y_n)$, and $f(x_1,\dots,x_p,x_{p+1},\dots, x_n)=(z_1:=x_1^t,z_2:=x_2,\dots,z_p:=x_p)$, if $p:=dim(Z)$, while the curve $C'$ is parametrically defined by the map $\gamma:w\to \gamma(w):=(w,0,\dots,0)\in Y$, for $w\in \Bbb C$ near $0$.  The coordinates $x,y,z$ with indices $2$ or more do not contribute to the computation, and are thus ignored; the sheaf $\gamma^*(\pi^*(f^*(\Omega^1_Z)))$ is thus generated by $w^{t.g-1}.dw$, while its saturation in $\pi^*(\Omega^1(X,\Delta))$ is generated by $\gamma^*(y_1^{g.(1-\delta)-1}.dw)=w^{g.(1-\delta)-1}.dw$, if $\delta$ is the $\Delta$-multiplicity of the divisor $D_1$ of local equation $x_1=0$ in $X$. This establishes the claim, since the local length at $y_0$ is then given by: $(g.t-1)-(g.(1-\delta)-1)=g.(t-\frac{1}{m})$, if $m=(1-\delta)^{-1}$ is the $\Delta$-multiplicity of $D_1$. \end{proof}

\begin{rem}
In particular, we see from this formula that the intersection number we compute is 
completely independent of the very ample hyperplane section we have used in order to
define $\pi:Y\to X$ and $\pi^*(\Omega^1(X,\Delta))$, although the map $\pi$ is ramified along $H$.
\end{rem}

\begin{rem}\label{ind} This proposition thus shows that the degree of `algebraically defined' quotient sheaves of $\pi^*(\Omega^1(X,\Delta))$ on `generic' curves of $Y$ is, in fact, computed from data defined on $X$, and thus independent on the cyclic cover $Y$. The first step of the proof of theorem \ref{tgsp} will, in fact, precisely show that such quotients are `algebraically defined' if anti-ample on Mehta-Ramanathan curves.

Remark also that proposition \ref{deg} holds true for \emph{any birational model} of $f$, provided one chooses $C'$ accordingly. In particular, we may (and shall in the end of the proof of theorem \ref{tgsp}) assume that $f:X\to Y$ is holomorphic and $\Delta$-neat, in the sense of definition \ref{dneat} below.
\end{rem}

\section{An orbifold version of Miyaoka's generic semipositivity}
\medskip 

A $\Bbb Q$ divisor $E$ on a projective normal variety $X$ is said to be \emph{pseudo-effective} if
the divisor $E+\varepsilon.H$ is $\Bbb Q$-effective (and thus big) for any rational $\varepsilon>0$. According to \cite{BDPP}, 
$E$ is pseudo-effective if and only if $E.C\geq 0$, for any irreducible member $C\subset X$ of any covering family of curves 
on $X$.

\begin{theorem}\label{tgsp} The sheaf $\pi^*\Omega^1(X,\Delta)$ is $\pi$-generically semi-positive if the pair $(X,\Delta)$ is log-canonical, and $K_X+\Delta$ is pseudo-effective on $X$. 
\end{theorem}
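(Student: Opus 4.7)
I plan to argue by contradiction, combining a descent-of-foliations argument with Campana's orbifold base construction. Suppose $\pi^*\Omega^1(X,\Delta)$ is not generically semi-positive: then for some polarization $B$ there is a $G$-invariant quotient $\cG$ with $\deg_{C'}(\cG)<0$ on a generic complete intersection curve $C'\in |\pi^*(mB)|^{n-1}$, $m\gg 1$. Passing to the maximally destabilizing $G$-equivariant term of a Harder-Narasimhan filtration of $\pi^*\Omega^1(X,\Delta)$, I may assume $\cG$ is $G$-semistable of strictly negative slope.

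The first step is to descend the kernel of this quotient to $X$. Dualizing gives an injection $\cG^{\vee}\hookrightarrow \pi^*T(X,\Delta)\subset \pi^*T_X$ (Remark \ref{degree}); saturating inside $\pi^*T_X$ produces a $G$-invariant saturated subsheaf, and the descent Lemma above yields $\pi^*\cE$ for a saturated coherent $\cE\subset T_X$. A standard Lie-bracket/Frobenius obstruction argument, applied to the semistable piece of smallest (negative) slope, shows that the obstruction $\wedge^2\cE\to T_X/\cE$ vanishes, so $\cE$ defines a holomorphic foliation on $X$.

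The principal obstacle is then to upgrade $\cE$ to an \emph{algebraically integrable} foliation. I plan to invoke an orbifold version of the Bogomolov-McQuillan theorem: the hypothesis $\deg_{C'}(\cG)<0$ forces $\det\cE$ to be sufficiently positive on the movable class $\pi^*(mB)^{n-1}$, and in the log-canonical pseudo-effective setting this is enough to guarantee that the leaves of $\cE$ are algebraic and rationally connected. This produces a rational fibration $f:X\dashrightarrow Z$ with $\cE=T_{X/Z}$ on $U_f$. Establishing this step in the orbifold log-canonical category (rather than the smooth compact one) is the technical heart of the argument.

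Once $f$ is in hand, Remark \ref{ind} lets me replace $(X,\Delta,f)$ by a $\Delta$-neat holomorphic birational model without altering any intersection number on $C'$. Under this identification the kernel of $\cG$ coincides up to saturation with the sheaf $\cF^{\Delta}$ of Proposition \ref{deg}, so that proposition gives
\[
  \tfrac{1}{g\,m^{n-1}}\,\deg_{C'}(\cG)
  \;=\;\bigl[\,K_{X/Z}+\Delta^{\mathrm{hor}}-D(f,\Delta)\,\bigr]\cdot H^{n-1}.
\]
To finish, I invoke Campana's orbifold base construction: one defines a $\Q$-divisor $\Delta_Z$ on $Z$ so that $(Z,\Delta_Z)$ is still log-canonical and $K_{X/Z}+\Delta^{\mathrm{hor}}-D(f,\Delta)=(K_X+\Delta)-f^*(K_Z+\Delta_Z)$ modulo terms non-positive on $H^{n-1}$. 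The rational connectedness of the general fiber of $f$, combined with the pseudo-effectivity of $K_X+\Delta$, forces $K_Z+\Delta_Z$ to be pseudo-effective, whence the right-hand side of the displayed identity is $\geq 0$ on $H^{n-1}$, contradicting $\deg_{C'}(\cG)<0$.
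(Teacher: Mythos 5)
Your opening moves match the paper's: take a destabilizing $G$-invariant quotient, pass to a maximal-slope Harder-Narasimhan piece of $\pi^*T(X,\Delta)$, dualize, saturate in $\pi^*T_X$, descend by the cyclic-descent lemma to a saturated $\cF^{(s)}\subset T_X$, and integrate. Two places, however, are glossed over, and the second is a genuine gap.

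First, you call the Frobenius-vanishing step ``a standard Lie-bracket obstruction argument.'' The paper flags this as exactly the delicate point in the orbifold setting: $\pi^*T_X$ is \emph{not} a Lie subalgebra of $T_Y$, so the natural map $\cL_0:\Lambda^2\cF_1^{(s)}\to \pi^*T_X/\cF_1^{(s)}$ obtained by $\cO_Y$-linear extension of the Lie bracket downstairs does \emph{not} coincide over $\pi^{-1}(\Supp\Delta)$ with the bracket on $Y$. The paper therefore does a local coordinate computation to show that $\cL_0\circ J$ factors through $\pi^*T(X,\Delta)/\cF_1$ before the semistability/slope argument can be invoked. If you skip this, the reduction to the ``slope of $\wedge^2$ is twice the slope'' argument is illegitimate. (Also, no ``orbifold version'' of Bogomolov--McQuillan is needed: after descent the foliation lives on $X$ itself and is ample on the Mehta--Ramanathan curves, so the usual statement applies.)

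Second, and more seriously, your final step does not close. You want $[K_{X/Z}+\Delta^{\rm hor}-D(f,\Delta)]\cdot H^{n-1}\geq 0$, and you attempt to derive it from pseudo-effectivity of $K_X+\Delta$ together with pseudo-effectivity of $K_Z+\Delta_Z$ (for an orbifold base $\Delta_Z$), via the formal identity $K_{X/Z}+\Delta^{\rm hor}-D(f,\Delta)\approx (K_X+\Delta)-f^*(K_Z+\Delta_Z)$. But a difference of two pseudo-effective $\Q$-classes need not have non-negative degree on $H^{n-1}$, so this is circular: you are implicitly assuming the pseudo-effectivity of $K_{X/Z}+\Delta^{\rm hor}-D(f,\Delta)$, which is exactly what has to be proved. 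The paper instead invokes a \emph{Viehweg-type weak-positivity theorem for direct images} (Theorem \ref{corcb}, drawn from \cite{Ca04}, Thm.\ 4.11, or \cite{K}, \cite{BP}): the $\Q$-bundle $K_{X/Z}+\Delta^{\rm hor}-D(f,\Delta)$ is itself pseudo-effective, provided $K_{X_y}+\Delta|_{X_y}$ is pseudo-effective on the generic fibre. That fibre hypothesis is satisfied because it is the restriction of the pseudo-effective class $K_X+\Delta$, not because the fibres are rationally connected (rational connectedness is in fact a red herring here: it would suggest \emph{anti}-pseudo-effectivity of $K_{X_y}$, which is irrelevant once $\Delta$ enters). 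Without this direct-image positivity input, your proposal has no mechanism to produce the contradiction.
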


\begin{rem} The proof in fact shows, more precisely, that if the pair $(X,\Delta)$ is log-canonical, and if the sheaf $\pi^*\Omega^1(X,\Delta)$ is not $\pi$-generically semi-positive, there exists a `neat' dominant fibration $f: (X,\Delta)\to Z$ (on some suitable birational model of $(X,\Delta))$ such that $K_{X}+\Delta$ is not pseudo-effective on the generic fibre $X_z$ of $f$. The dimension of $X_z$ is the rank of the largest semi-stable quotient of minimal slope of $\pi^*\Omega^1(X,\Delta)$ relative to some polarisation $\pi^*(H)$ of $Y$ such that $(K_X+\Delta).H^{n-1}<0$. In particular, these fibrations are all constant maps if, for all such polarisations, $\pi^*\Omega^1(X,\Delta)$ is semi-stable.
\end{rem}

We shall need the following immediate generalisation, deduced from the fact that the tensor powers of nef bundles on a smooth curve are nef:

\begin{corollary}\label{cgsp} For any integer $m\geq 0$, the sheaf $\otimes^{m} \pi^*\Omega^1(X,\Delta)$ is $\pi$-generically semi-positive if the pair $(X,\Delta)$ is log-canonical, and $K_X+\Delta$ is pseudo-effective on $X$. 
\end{corollary}

The generic semi-positivity theorem of Y. Miyaoka (\cite{Mi}) asserts that if {\sl a normal projective variety  $X$ is not uniruled, then $\Omega^1(X)$ 
is generically semi-positive}\footnote{The converse is an open delicate problem.}. This statement is equivalent to the conjunction of two results:
 {\sl the bundle $\Omega^1(X)$ 
is generically semi-positive if $K_X$ is pseudo-effective} and: \emph{the canonical bundle $K_X$ is pseudo-effective if and only if $X$ is not uniruled}, respectively.

Theorem \ref{tgsp} above extends the first assertion to the orbifold situation\footnote{Under the log-canonicity assumption.}, giving when $\Delta=0$ an alternative proof in characteristic zero.

The second statement admits an orbifold counterpart, but {\it a priori} in the klt case only. This is an immediate application of \cite{BCHM}:

\begin{theorem}\label{two} $K_X+\Delta$ is pseudo effective if and only if $(X,\Delta)$ is log-canonical and not `weakly uniruled' (i.e.: covered by rational curves $R$ such that $(K_X+\Delta).R<0$).
\end{theorem}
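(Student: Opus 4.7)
The statement splits into two directions: the forward direction follows immediately from \cite{BDPP}, and the reverse direction from \cite{BCHM} after a small klt-approximation. Log-canonicity of $(X,\Delta)$ is part of the standing setup of the paper; what genuinely requires proof is the equivalence between pseudo-effectivity of $K_X+\Delta$ and the absence of a covering family of rational curves $R$ with $(K_X+\Delta)\cdot R<0$.

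For ($\Rightarrow$), assume $K_X+\Delta$ is pseudo-effective. By the duality between the pseudo-effective cone of divisors and the cone of movable curves proved in \cite{BDPP}, any pseudo-effective divisor has nonnegative intersection with every irreducible member of every covering family of curves. A covering family of rational curves $R$ with $(K_X+\Delta)\cdot R<0$ would therefore yield a direct contradiction, so $(X,\Delta)$ is not weakly uniruled.

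For ($\Leftarrow$) I would argue by contrapositive: assuming $K_X+\Delta$ is not pseudo-effective, produce a covering family of rational curves on which $K_X+\Delta$ is negative. First reduce to the klt case by setting $\Delta_\epsilon:=(1-\epsilon)\Delta$; because the coefficients of $\Delta$ lie in $[0,1]$, the pair $(X,\Delta_\epsilon)$ is klt for every $\epsilon\in(0,1)$, and since the pseudo-effective cone is closed, $K_X+\Delta_\epsilon$ remains non-pseudo-effective for $\epsilon$ sufficiently small. Then invoke \cite{BCHM}: a $(K_X+\Delta_\epsilon)$-MMP with scaling of an ample divisor exists and, since the log-canonical class is not pseudo-effective, must terminate with a Mori fiber space $\phi:X'\to Z$. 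The generic fibers of $\phi$ are log-Fano, hence rationally connected, producing a covering family of rational curves $R'\subset X'$ with $(K_{X'}+\Delta'_\epsilon)\cdot R'<0$.

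The final step is to transfer this family back to $X$ and upgrade the inequality from $\Delta_\epsilon$ to $\Delta$. Each MMP step is either divisorial or a flip, so it is an isomorphism outside a codimension-two locus; the strict transform of a generic member of the family on $X'$ is therefore a non-contracted rational curve $R\subset X$ with $(K_X+\Delta_\epsilon)\cdot R=(K_{X'}+\Delta'_\epsilon)\cdot R'<0$. Writing $(K_X+\Delta)\cdot R=(K_X+\Delta_\epsilon)\cdot R+\epsilon(\Delta\cdot R)$ and using that the covering family can be chosen of bounded degree relative to a fixed polarization (so $\Delta\cdot R$ stays bounded), one shrinks $\epsilon$ to preserve negativity. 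The main obstacle I expect is precisely this transfer-and-perturbation step — tracking the family through the birational modifications and through the $\epsilon\to 0$ limit — but it is by now a standard application of \cite{BCHM}, which is why the authors describe the proof as immediate.
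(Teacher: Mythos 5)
The paper gives no proof of this statement beyond the sentence ``which is an immediate application of [BCHM],'' so there is nothing to compare against the student's route; I can only assess the proposal on its own terms.

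Your forward direction is fine: pseudo-effectivity of $K_X+\Delta$ plus the duality of \cite{BDPP} rules out any covering family of curves, rational or not, on which $K_X+\Delta$ is negative.

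The reverse direction, however, has a genuine gap precisely at the ``transfer-and-perturbation'' step, and your own fix does not close it. You perturb by \emph{scaling the boundary down}, $\Delta_\epsilon:=(1-\epsilon)\Delta$, run a $(K_X+\Delta_\epsilon)$-MMP to a Mori fibre space, and obtain a covering family of rational curves $R_\epsilon$ with $(K_X+\Delta_\epsilon)\cdot R_\epsilon<0$. To return to $\Delta$ you must control
$(K_X+\Delta)\cdot R_\epsilon=(K_X+\Delta_\epsilon)\cdot R_\epsilon+\epsilon\,\Delta\cdot R_\epsilon,$
and the correction term $\epsilon\,\Delta\cdot R_\epsilon$ has the \emph{wrong sign}: $\Delta$ is effective and the curves are moving, so in general $\Delta\cdot R_\epsilon>0$. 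Your claim that one can ``use that the covering family can be chosen of bounded degree relative to a fixed polarization and shrink $\epsilon$'' is circular: the family $R_\epsilon$, and indeed the whole MMP producing it, changes with $\epsilon$, and there is no a priori bound on $\Delta\cdot R_\epsilon$ or a uniform lower bound on $-(K_X+\Delta_\epsilon)\cdot R_\epsilon$ as $\epsilon\to 0$. Equivalently, on the output $X'$ one has $(K_{X'}+\Delta')\cdot R'_\epsilon=(K_{X'}+\Delta'_\epsilon)\cdot R'_\epsilon+\epsilon\,\Delta'\cdot R'_\epsilon$, and the first term is negative but the second is nonnegative; unless $\Delta'$ happens to be vertical for the Mori fibre space, there is nothing forcing the sum to be negative. (There is also a secondary point: if $X$ itself has lc but non-klt singularities, $(X,(1-\epsilon)\Delta)$ need not be klt, so one cannot invoke \cite{BCHM} directly; a dlt modification or small $\mathbb{Q}$-factorialization is needed.)

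The correct perturbation goes the other way: \emph{add} a small ample divisor instead of (or in addition to) shrinking $\Delta$. If $K_X+\Delta$ is not pseudo-effective and $H$ is ample, then $K_X+\Delta+\epsilon H$ is still not pseudo-effective for small $\epsilon>0$ because the pseudo-effective cone is closed; after a dlt/klt modification one runs a $(K_X+\Delta+\epsilon H)$-MMP (or, if $(X,\Delta)$ is already klt, the $(K_X+\Delta)$-MMP directly with no perturbation at all) to a Mori fibre space whose general fibre is rationally connected, giving a covering family of rational curves $R_\epsilon$ with $(K_X+\Delta+\epsilon H)\cdot R_\epsilon<0$. Then automatically
$(K_X+\Delta)\cdot R_\epsilon=(K_X+\Delta+\epsilon H)\cdot R_\epsilon-\epsilon\,H\cdot R_\epsilon<0,$
with no need for any degree bound, since $H\cdot R_\epsilon>0$ for an ample $H$ and a curve $R_\epsilon$. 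This is the point at which your plan, as written, breaks, and the fix is a one-line change of perturbation.
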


The property of `weak-uniruledness' is, however, too weak to give interesting geometric informations. See \cite{Ca07} for more geometric (but in general only conjectural) variants of `orbifold uniruledness'.

\begin{rem}\label{rwu} 

\

1. It follows from theorem \ref{tgsp} and its proof that the property of $\pi^*(\Omega^1(X,\Delta))$ being gsp is independent of the cyclic cover used to define this property if $K_X+\Delta$ is pseudo-effective. Conversely, if $\pi^*(\Omega^1(X,\Delta))$ is not gsp for some $\pi:Y\to X$, the proof of theorem \ref{tgsp} constructs a fibration as in proposition \ref{deg} above, and this proposition shows that $(\pi')^*(\Omega^1(X,\Delta))$ will be non-gsp for every other cyclic cover $\pi'$ associated to $(X,\Delta)$.

2. The conclusion of theorem \ref{tgsp} can conjecturally be strengthened to: ``every quotient of $\Omega^1(X,\Delta)$ has a pseudo-effective determinant". Our arguments do not however permit to prove this. When $X$ is smooth and $\Delta=0$, this has been shown in \cite{CPe}.
\end{rem}

The proof of theorem \ref{tgsp} consists of the following steps: arguing by contradiction, we construct, by Harder-Narasimhan theory and Mehta-Ramanathan theorem, a foliation on $X$, as Miyaoka did,  (the involutiveness of the distribution is in our orbifold context more delicate, however). The algebraicity of the leaves is shown by applying the criterion of Bogomolov-MacQuillan (\cite{BMQ}, see also \cite{Bo},\cite{Ha}, \cite{KST}). The contradiction is obtained using a slight modification of the orbifold version of Viehweg weak-positivity of direct images of relative canonical bundles as in \cite{Ca04}, theorem 4.13.

We notice here that these two ingredients were also used in a parallel manner by Andreas H\"oring in \cite{Ho}, theorem 1.4, to show that if $X$ is a normal projective variety of dimension $n$ and $A$ a nef and big Cartier divisor on $X$ such that $K_X+ nA$ is nef, then $\Omega^1_X\otimes  A$ is generically semi-positive, unless $X$ is birationally a scroll. 

\

\noindent We now start the proof of Theorem \ref{tgsp}. 

\smallskip 

\begin{proof} We consider a cyclic cover $\pi:Y\to X$ associated to the orbifold pair $(X,\Delta)$. Arguing by contradiction, we assume the existence of a $G$-invariant torsion free sheaf of $\displaystyle \cO_{Y}$-modules, say 
$\cG_0$, which admits a surjective map 
$$\pi^*\Omega^1(X, \Delta)\to \cG_0\to 0$$
and such that $\deg_{H'}(\cG_0)<0$; here we use the notation $H':=f^*(B)$ for the (ample) inverse image of an arbitrary hyperplane section $B$ on $X$. In other words, the degree of the restriction of $\cG_0$ to any
Mehta-Ramanathan curve $C'$ relative to $H'$ is negative. 
 We can assume that $C'$ do not intersects the singular locus of $\cG_0$, that is to say, 
 that $\cG_0$ is locally free along $C'$. The dual $\cG_0^* $ of $\cG_0$, 
 is a $G$-invariant torsion free subsheaf of $\pi^*(T{(X,\Delta)})$, and $\deg_{H'}(\cG_0^*) >0$.
 By hypothesis $K_X+ \Delta$ is pseudo-effective, the degree of the determinant of $\pi^*(T{(X,\Delta)})$ on $C'$ is negative, by Remark \ref{degree}, and the orbifold tangent sheaf $\pi^*(T{(X,\Delta)})$ \emph{is not $H'$-semi-stable}. 

Let $\cF_1$ be the semi-stable piece of the Harder-Narasimhan filtration of $\pi^*(T{(X,\Delta)})$ of maximal $H'$-slope.
 By Mehta-Ramanathan, it restricts to a piece of maximal slope on the generic Mehta-Ramanathan curves $C'\subset Y$ associated to $H':=f^*(B)$.

 \begin{lemma} The sheaf $\cF_1$ is $G$-invariant and saturated in $\pi^*(T{(X,\Delta)})$. Moreover, the restriction of 
 $\cF_1$ to $C^\prime$ is semi-stable, and hence ample.
 \end{lemma}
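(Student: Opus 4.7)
The plan is to verify the three assertions in turn, each being a fairly standard consequence of Harder--Narasimhan / Mehta--Ramanathan machinery once one checks compatibility with the $G$-action.

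First, for $G$-invariance, I would exploit the uniqueness of the Harder--Narasimhan filtration. The polarization $H'=\pi^*(B)$ is by construction $G$-invariant, so for every $h\in G$ the pullback $h^*(\pi^*T(X,\Delta))$ is canonically identified with $\pi^*T(X,\Delta)$ (since $\pi^*T(X,\Delta)$ carries a canonical $G$-linearization coming from its definition over the open set $U$ and extension by $(i_U)_*$). Applying $h^*$ to the HN filtration of $\pi^*T(X,\Delta)$ produces another HN filtration with respect to the same $G$-invariant polarization; by uniqueness, $h^*\cF_1=\cF_1$ for every $h\in G$. This gives $G$-invariance.

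Second, saturation of $\cF_1$ inside $\pi^*T(X,\Delta)$ is built into the definition of the maximal destabilizing subsheaf: if $\widetilde{\cF_1}$ denoted the saturation of $\cF_1$, then $\widetilde{\cF_1}$ has the same rank but degree at least that of $\cF_1$ (with strict inequality if the quotient $\widetilde{\cF_1}/\cF_1$ has torsion), so $\mu_{H'}(\widetilde{\cF_1})\geq \mu_{H'}(\cF_1)$; maximality forces equality and saturation. No separate argument is needed.

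Third, for the restriction to a generic Mehta--Ramanathan curve $C'\subset Y$ associated to $H'=\pi^*(B)$, the Mehta--Ramanathan restriction theorem says that a semi-stable sheaf on a normal projective variety remains semi-stable when restricted to a generic complete intersection curve of sufficiently high multiple of the polarization. Since $\cF_1$ is semi-stable with respect to $H'$ by construction, its restriction to a generic such $C'$ is semi-stable. This is where I would invoke (as the statement of the lemma already anticipates) the passage to $m\gg 1$ in the definition of $C'$.

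Finally, for ampleness on $C'$: on a smooth projective curve, a semi-stable vector bundle is ample if and only if it has strictly positive degree (Hartshorne's characterization). So the task reduces to showing $\deg_{C'}(\cF_1)>0$. This follows from the hypothesis of the contradiction: the subsheaf $\cG_0^*\subset \pi^*T(X,\Delta)$ has $\deg_{H'}(\cG_0^*)>0$, hence $\mu_{H'}(\pi^*T(X,\Delta))_{\max}\geq \mu_{H'}(\cG_0^*)>0$, i.e.\ $\mu_{H'}(\cF_1)>0$, and by Mehta--Ramanathan this slope equals (up to the positive constant $m^{n-1}$) the slope of $\cF_1|_{C'}$. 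Therefore $\deg_{C'}(\cF_1)>0$, and combined with semi-stability one concludes ampleness. The whole argument is routine; the only point that required slight care was the $G$-invariance, which is the reason one insists on a $G$-invariant polarization $H'=\pi^*(B)$ from the outset.
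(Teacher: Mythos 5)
Your proposal is correct and follows the same route as the paper's (very terse) proof: $G$-invariance from uniqueness of the Harder--Narasimhan filtration under the $G$-invariant polarization $H'=\pi^*(B)$, saturation by the maximality defining $\cF_1$, semi-stability of the restriction by Mehta--Ramanathan, and ampleness from positive degree plus semi-stability on a curve. You simply spell out the pointers the authors leave implicit; no difference of method.
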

 
 \begin{proof} The first assertion is a consequence of its maximality properties,
 together with the fact that we are considering the stability with respect to an inverse, and hence $G$-invariant, polarisation. 
 The second one
 is standard in Harder-Narasimhan theory. The third one is due to the fact that the degree is strictly positive, together with semi-stability.
 \end{proof}

 \begin{lemma} {Let $L: \wedge^2\cF_1\to f^*T{(X,\Delta)}/\cF_1$ be any $\cO_{Y}$-linear map. Then 
$L= 0$}. 
 \end{lemma}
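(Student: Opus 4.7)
The strategy is a slope comparison on a generic Mehta--Ramanathan curve $C' \subset Y$ associated to $H' = \pi^*(B)$. Since the family of such $C'$ sweeps out a Zariski dense open subset of $Y$, it suffices to show $L|_{C'} = 0$ for generic $C'$. I choose $C'$ to avoid the singular loci of $\cF_1$, $\pi^*T(X,\Delta)$ and $\pi^*T(X,\Delta)/\cF_1$, so that all three restrictions to $C'$ are locally free. Set $\mu_1 := \mu_{H'}(\cF_1)$; by Remark~\ref{degree} and the contradiction hypothesis, $\cF_1 \supseteq \cG_0^{\ast}$ has strictly positive $H'$-degree, and hence $\mu_1 > 0$. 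If $\cF_1$ has rank $\leq 1$ then $\wedge^2\cF_1 = 0$ and there is nothing to prove, so I may assume $\cF_1$ has rank $\geq 2$.

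By the previous lemma, $\cF_1|_{C'}$ is semistable of slope $\mu_1$. Over a smooth projective curve in characteristic zero, tensor operations preserve $\mu$-semistability, so $\cF_1|_{C'} \otimes \cF_1|_{C'}$ is semistable of slope $2\mu_1$, and its direct summand $\wedge^2\cF_1|_{C'}$ is therefore semistable as well. A direct rank--degree computation gives $\mu(\wedge^2\cF_1|_{C'}) = 2\mu_1$, so every nonzero quotient of $\wedge^2\cF_1|_{C'}$ has slope at least $2\mu_1$.

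On the other hand, by the very definition of $\cF_1$ as the maximal destabilizing piece of the Harder--Narasimhan filtration of $\pi^*T(X,\Delta)$, every successive slope of the Harder--Narasimhan filtration of the quotient $\pi^*T(X,\Delta)/\cF_1$ is strictly less than $\mu_1$. By Mehta--Ramanathan, this filtration restricts term by term to the Harder--Narasimhan filtration of $(\pi^*T(X,\Delta)/\cF_1)|_{C'}$, so every coherent subsheaf of $(\pi^*T(X,\Delta)/\cF_1)|_{C'}$ has slope strictly less than $\mu_1$.

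If $L$ were nonzero then, for generic $C'$, the restriction $L|_{C'}$ would be nonzero, and its image in $(\pi^*T(X,\Delta)/\cF_1)|_{C'}$ would be at once a quotient of $\wedge^2\cF_1|_{C'}$, of slope $\geq 2\mu_1$, and a subsheaf of $(\pi^*T(X,\Delta)/\cF_1)|_{C'}$, of slope $< \mu_1$. Since $2\mu_1 > \mu_1 > 0$, this is absurd, so $L = 0$. The main technical ingredient, and the only nontrivial point, is the preservation of $\mu$-semistability under $\wedge^2$ over a smooth curve in characteristic zero; this is precisely what converts the positivity $\mu_1 > 0$ into the usable lower bound $2\mu_1$ on the slope of any nonzero quotient of $\wedge^2\cF_1|_{C'}$.
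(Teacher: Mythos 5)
Your proof is correct and follows the same argument the paper compresses into a single sentence: Miyaoka's slope comparison on a generic Mehta--Ramanathan curve, with the key technical input being that $\wedge^2\cF_1|_{C'}$ is semistable of slope $2\mu_1$ because semistability is preserved by tensor operations over a smooth curve in characteristic zero. One small slip: the inclusion $\cG_0^* \subseteq \cF_1$ need not hold (the maximal destabilizer contains subsheaves of \emph{maximal}, not merely positive, slope); but since $\cG_0^*$ is a subsheaf of $\pi^*T(X,\Delta)$ of positive $H'$-slope, one still has $\mu_1 = \mu_{\max}(\pi^*T(X,\Delta)) \geq \mu(\cG_0^*) > 0$, so the argument stands unchanged.
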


\begin{proof}This is a consequence of the semi-stability of $\cF_1$, via an argument due to Y. Miyaoka in \cite{Mi}, resting on the fact that the slope of the wedge product is twice the slope of the factors, by semi-stability again. 
 \end{proof}

\noindent We now need to `descend' from $(X,\Delta)$ to the manifold $X$. We thus consider the saturation of $\cF_1$ in $\pi^*T_{X}$, denoted:  
 $\cF_1^{\rm (s)}\subset \pi^*T_{X}=(i_U)_*(T_U)$, for $U=X^{reg}$. We remark that both of these sheaves are $G$-invariant, and therefore so is 
$\cF_1^{\rm (s)}$. Therefore by Lemma 2.2, there exists a sheaf $\cF^{\rm (s)}\subset \cO(T_X)$
such that $\cF_1^{\rm (s)}= \pi^*( \cF^{\rm (s)})$.

\begin{lemma} \label{Lie}The sheaf $\cF^{\rm (s)}$ is closed under the Lie bracket; it thus defines a 
foliation on $X$. These statements hold on the regular part of $X$.
\end{lemma}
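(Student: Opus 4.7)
The claim is equivalent to the vanishing of the $\cO_X$-linear obstruction map $\lambda:\wedge^2 \cF^{(\mathrm s)}\to T_X/\cF^{(\mathrm s)}$ induced by the bracket (the Lie bracket is only $\bC$-linear, but the Leibniz identity $[v,fw]=f[v,w]+v(f)\cdot w$ shows that modulo $\cF^{(\mathrm s)}$ the map is $\cO_X$-linear, since $v(f)\cdot w\in\cF^{(\mathrm s)}$). The plan is to work upstairs first, using the previous lemma for $\cF_1$, then propagate to the saturation $\cF_1^{(\mathrm s)}$, and finally descend to $X$ via the Galois action.

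First I would verify that $\pi^*T(X,\Delta)$ is itself bracket-closed on $\pi^{-1}(U)$. In the local coordinates $(y_1,\dots,y_n)$ of Section 1.1, the sheaf is freely generated by the (possibly meromorphic) vector fields $\tau_k:=y_k^{1-g_kb_k}\partial_{y_k}$, and a direct computation gives $[\tau_k,\tau_\ell]=0$ for all $k,\ell$; by Leibniz, brackets of arbitrary $\cO_Y$-linear combinations of the $\tau_k$ remain in the $\cO_Y$-span of the $\tau_k$'s. Consequently the Lie bracket induces, via the same Leibniz identity, an $\cO_Y$-linear map $\wedge^2\cF_1\to\pi^*T(X,\Delta)/\cF_1$, and the previous lemma forces this map to vanish, i.e. $[\cF_1,\cF_1]\subset\cF_1$.

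Next I would pass to the saturation $\cF_1^{(\mathrm s)}\subset\pi^*T_X$. The quotient $\pi^*T_X/\cF_1^{(\mathrm s)}$ is torsion-free by construction, and the bracket yields an $\cO_Y$-linear map $\wedge^2\cF_1^{(\mathrm s)}\to\pi^*T_X/\cF_1^{(\mathrm s)}$ which vanishes on the dense open subset where $\cF_1=\cF_1^{(\mathrm s)}$; a morphism into a torsion-free sheaf vanishing on a dense open is zero, so $[\cF_1^{(\mathrm s)},\cF_1^{(\mathrm s)}]\subset\cF_1^{(\mathrm s)}$. For the final descent, Lemma 2.2 gives $\cF_1^{(\mathrm s)}=\pi^*\cF^{(\mathrm s)}$, and the saturation of $\cF^{(\mathrm s)}$ in $T_X$ is inherited from that of $\cF_1^{(\mathrm s)}$ in $\pi^*T_X$; off the branch divisor of $\pi$ the cover is étale, pullback commutes with the Lie bracket, and hence $\pi^*\lambda$ agrees with the $Y$-side obstruction already shown to vanish. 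Since $T_X/\cF^{(\mathrm s)}$ is torsion-free on $X^{\mathrm{reg}}$, the vanishing of $\pi^*\lambda$ on a dense open propagates to $\lambda\equiv 0$ on $X^{\mathrm{reg}}$, giving the required involutivity and hence the foliation.

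The main obstacle is the bracket-closure of $\pi^*T(X,\Delta)$ in Step~1: it relies crucially on the fact that each local generator $\tau_k$ involves only its own coordinate $y_k$, so the cross-derivatives $\tau_k(y_\ell^{1-g_\ell b_\ell})$ vanish identically; the remaining Steps~2 and~3 are standard torsion-free-target and étale-descent arguments.
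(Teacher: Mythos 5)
Your Step~1 contains a genuine gap: $\pi^*T(X,\Delta)$ is \emph{not} closed under the Lie bracket of $T_Y$, and the Leibniz computation is exactly where this fails. Indeed $[f\tau_k,h\tau_\ell]=fh[\tau_k,\tau_\ell]+f\,\tau_k(h)\,\tau_\ell-h\,\tau_\ell(f)\,\tau_k$, and while $[\tau_k,\tau_\ell]=0$, the coefficient $\tau_k(h)=y_k^{1-g_kb_k}\partial_{y_k}h$ acquires a pole as soon as $g_kb_k\geq 2$, so $f\,\tau_k(h)\,\tau_\ell$ need not be an $\cO_Y$-combination of the $\tau$'s. Concretely, take $\tau=y^{-1}\partial_y$ (i.e. $g_kb_k=2$); then $\tau$ and $y\tau=\partial_y$ are both sections of $\cO_Y\cdot\tau$, but $[\tau,y\tau]=y^{-2}\partial_y=y^{-1}\tau\notin\cO_Y\cdot\tau$. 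The ``main obstacle'' you point at (vanishing of the cross-derivatives $\tau_k(y_\ell^{1-g_\ell b_\ell})$, giving $[\tau_k,\tau_\ell]=0$) is the easy part; the real difficulty is $\tau_k$ applied to arbitrary $\cO_Y$-functions of $y_k$. The same problem undermines your Step~2, since $\pi^*T_X$ is likewise not bracket-closed, so ``the bracket yields an $\cO_Y$-linear map $\wedge^2\cF_1^{(\mathrm s)}\to\pi^*T_X/\cF_1^{(\mathrm s)}$'' is not an a priori well-defined statement over the branch locus.

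The paper flags precisely this: it does \emph{not} work with the $T_Y$-bracket, but with $\cL_0:=\pi^*\cL$, the $\cO_Y$-linear extension of the pullback of the bracket map already defined downstairs on $T_X$ (which is automatically well-defined). It then proves, by a local coordinate computation, that $\cL_0\circ J$ lands in $\pi^*(T(X,\Delta))/\cF_1$, so that the degree Lemma applies. The computation crucially exploits the \emph{specific} form of the local generators of $\cF_1$, namely $g_m\cdot\pi^*(v_m)$ with $v_m$ holomorphic vector fields on $X$ and $g_m\in\cO_Y$: the only derivatives that arise are $y_1^{1-g}\,\partial a^j_m/\partial y_1=\pi^*(\partial a^j_m/\partial x_1)$, which are holomorphic \emph{because} the $a^j_m$ are pulled back from $X$. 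This is the input missing from your argument; a general section of $\cF_1$ as an abstract $\cO_Y$-combination of the $\tau_k$'s does not enjoy this property. Your Steps~2--3 (torsion-free target, \'etale descent) would carry through once this point is fixed, so the strategy is salvageable but the key technical step needs the paper's more careful treatment.
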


\medskip

To prove this lemma, we need here to carefully distinguish the Lie brackets of vector fields $\cL_X$ on $X^{reg}$ and $\cL_Y$ on $Y^{reg}$, since $\pi^*(\cO(T_X))$ is not closed under $\cL_Y$.
\medskip

Let $\cL: \Lambda^2\cF^{\rm (s)}\to T_X/\cF^{\rm (s)}$ be
deduced from the Lie bracket on the tangent bundle $T_X$. Let 
$\pi^* \cL$ be the map deduced from $\cL$ by inverse image and extension by $\cO_{Y}$-linearity; it is defined as follows:
$$\pi^*(\cL): \Lambda^2\cF_1^{\rm (s)}\to \pi^* T_X/\cF_1^{\rm (s)}.$$

Let $J:\Lambda^2 \cF_1\to \Lambda^2\cF_1^{\rm(s)}$, and $J_1:\pi^*(T(X,\Delta))/\cF_1\to \pi^*(TX)/ \cF_1^{\rm(s)}$ be the natural injections (recall that $\cF_1=\cF_1^{\rm(s)}\cap \pi^*(T(X,\Delta)))$. We just need to show that $\cL\circ J$ vanishes along any sufficiently generic complete intersection curve of large multiples of $H'$ on $Y$, which immediately follows from the following lemma, by the slope argument used in lemma 2.6 above. Indeed, if $\cL_X$ did not vanish identically (ou $X^{reg})$, then so would do also $\pi^*(\cL)$ and $\pi^*(\cL)\circ J$, and also its restriction to any curve $C'$ as above.

\begin{lemma}\label{factor}
 There exists a natural $\cO_Y$-linear factorisation $\cL_1:\Lambda^2\cF_1\to \pi^*(T(X,\Delta))/\cF_1$ of $\pi^*(\cL_X)\circ J:\Lambda^2\cF_1\to \pi^*(TX/F^{\rm(s)})$ through $J_X$, i.e: such that $\pi^*(\cL_X)\circ J=J_X\circ\cL_1$.
 
 More generally, $\pi^*(\cL_X)$ maps $\Lambda^2(\pi^*(T(X,\Delta))$ into $\pi^*(T(X,\Delta))$. In other words: $\pi^*(T(X,\Delta))$ is closed under the lift of the Lie bracket $\cL_X$.
\end{lemma}

\begin{proof}  The first assertion is an immediate consequence of the last one, which we now prove.

 We chose local coordinates $x=(x_1,...,x_n)$ and $y=(y_1,...,y_n)$ near $a:=\pi(b)$ and $b\in D_1$, with $D_1$ of equation $x_1=0$ the local support of $\Delta$ near $a$, so that $\pi:Y\to X$ is locally given by: $\pi(y)=(x_1:=y_1^g,x_2=y_2,...,x_n:=y_n)$ near $b$. We denote by $c$ the coefficient of $D_1$ in $\Delta$.

Local generators as $\cO_Y$-modules of $\pi^*(TX)$ (resp. $\pi^*(T(X,\Delta))$ are: $(\partial_1:=\pi^*(\frac{\partial}{\partial x_1}), \partial_2:=\pi^*(\frac{\partial}{\partial x_2}),..., \partial _n:=\pi^*(\frac{\partial}{\partial x_n}))$ and: $(y_1^{gc}.\partial _1,\partial _2,..., \partial _n))$, respectively.

Any local $\varphi\in\cO_Y$ 
can be uniquely written: 
$\varphi (y)= \sum_{k=0}^{g-1}y_1^t.\psi_t(x),$
for some holomorphic functions $(\psi_t)$. 

Let $v=\sum_{j=0}^{n}\varphi_j(y)\partial_j$ be a local section of $\pi^*(T(X,\Delta))$.
Then $v= \sum_{t=0}^{g-1}y_1^t. w_t$
with: $w_t:=\sum_{j=1}^{j=n}\psi_{j,t}(x)\partial_j$, for each $0\leq t\leq g-1$, and $\psi_{1,t}$ divisible by $x_1$ for $0\leq t\leq gc-1$. 

Then, $v$ is a section of $\pi^*(T(X,\Delta))$ translates to: for $0\leq t\leq (gc-1)$, $w_t$ is a section  
of the subsheaf $\pi^*(\cV_1):=\pi^*(T_X(-log(D_1)))$ of $\pi^*(T_X)$, generated by:
$(x_1.\partial_1, \partial_2,\dots, \partial _n)$.

\noindent Therefore we have the decomposition:
$v= \sum_{t=0}^{gc-1}y_1^k. w_t+ \sum_{p=gc}^{g-1}y_1^p. w_p$
with each $w_t\in \cV_1$, for $0\leq t \leq cg-1$.

The subsheaf $\cV_1$ of $T_X$ is stable by the Lie bracket $\pi^*(\cL_X)$, and so:
\begin{enumerate}

\item[(1)] For each $0\leq t,s\leq cg-1$ we have: $\displaystyle \pi^*(\cL_X) \big(y_1^t.w_t, y_1^s.w_s)\big)= y_1^{t+s}\pi^*(\cL_X)(w_t, w_s)$, which is a local section of $\pi^{*}\cV_1\subset \cF_1$.
\smallskip

\item[(2)] If 
we have $\max{p, q}\geq cg$, then the expression:$\displaystyle \pi^*(\cL_X) \big(y_1^p. w_t, y_1^s.
w_q)\big)= y_1^{p+q}.\pi^*(\cL_X)(\wt w_p, \wt w_q)$, which
is divisible by $y_1^{cg}$.
\end{enumerate}
\noindent The Lemma \ref{factor} is therefore proved, since for any two sections $v,v'$ of $\pi^*(T(X,\Delta))$, $\pi^*(\cL_X)(v,v')$ is a sum of terms of the preceding two forms (1) or (2) .\end{proof}
\bigskip

\noindent The sheaf $\cF^{\rm (s)}$ defines thus a foliation on the regular part of $X$,
The restriction of $\cF^{\rm (s)}$ to any curve $C$ 
which is a complete intersection of $n-1$ hyperplanes linearly equivalent to any large enough multiple of $B$ is ample, 
since this is already the case for $\cF_1$.
By  \cite{BMQ}, the leaves of $\cF^{\rm (s)}$ through any \emph{generic} point of $X$
are algebraic (since the generic curves $C$ as above 
avoid the singularities of the foliation defined by $\cF^{\rm (s)}$). The statement of \cite{BMQ} obviously holds with the very same proof in the normal case as well, since the curves we consider are contained in the regular part of $X$.
\smallskip

We thus obtain a rational fibration $f:X\dasharrow Z$, such that for generic $x\in X$, the kernel of the differential 
$df_x$ is equal to $\cF^{\rm (s)}_x$. The idea to finish the proof is that,
since $(K_X+ \Delta)$ is pseudo-effective\footnote{And since $(X,\Delta)$ is log-canonical. This the place where this hypothesis is used.}, the relative canonical bundle of $f$ is pseudo-effective on any `neat' model of $f$,
which contradicts the positivity of the degree of $\cF^{\rm (s)}$ when restricted to a generic curve $C$.
The quotient sheaf $Q_{f,\Delta}$ of $\pi^*(\Omega^1(X,\Delta))$ we have considered has however as kernel, not $f^*(\Omega^1_Z)$, but its
saturation in $\pi^*(\Omega^1(X,\Delta))$. The difference for the degree computed on $C'$ is however, after proposition \ref{deg} above, interpreted geometrically as coming from the orbifold divisor $\Delta$ and the multiple fibres\footnote{They thus play a crucial role even when $\Delta=0$.} of any `neat' model of $f$. The needed refinement of the pseudo-effectivity of the relative canonical bundles turns out to be essentially the ones given either in \cite{Ca04}, theorem 11.3; it can be equally 
extracted from \cite{K} or \cite{BP}.

We introduce some notations and definitions: given a surjective map $\varphi: M\to N$ between two projective manifolds
$M$ and $N$, we denote by $D_N(\varphi)$ the set
$$D_N(\varphi):= \{y\in N\vert \varphi^*(y) \hbox{ is not smooth} \}.$$
Let $D_M(\varphi):= \varphi^{-1}\big(D_N(\varphi)\big)$ be the inverse image of $D_N(\varphi)$. We also consider 
a divisor $\Delta$ on $M$; in this context, we recall the following notion.

\begin{defn}\label{dneat} We say that the map $\varphi$ is $\Delta$-neat if the following requirements are fulfilled.

\begin{enumerate}

\item[(a)] The set $D_N(\varphi)$ is a (possibly empty) divisor.
\smallskip

\item[(b)] The divisors $D_N(\varphi)$ and $\Delta+ D_M(\varphi)$ have normal crossings.
\smallskip

\item[(c)] No component of $\Delta$ is $\varphi$-exceptional.
\smallskip

\end{enumerate}

\end{defn}

Starting from our initial log-canonical $(X,\Delta)$, we can thus take a log-resolution $g:X'\to X$ such that $X'$ is smooth, and a smooth orbifold pair $(X',\Delta')$ with $f':X'\to Z'$ holomorphic and birationally equivalent to $f$ via a modification $v:Z'\to Z$, $Z'$ smooth, such that: $g_*(\Delta')=\Delta$, $K_{X'}+\Delta'=g^*(K_X+\Delta)+E$, with $E$ $g$-exceptional, and such that, moreover, $f':X'\to Z'$ is $\Delta'$-neat. Because our curves $C=g_*(C')$, with $C'\subset Y$ a Mehta-Ramanathan curve for $H'=\pi^*(B)$ do not meet the indeterminacy locus of $g^{-1}:X\dasharrow X'$, we still know that $\cF^{(s)}$ is ample on $C'$ (identified with its isomorphic strict transform in $X'$). We can and shall now thus argue as if $X=X'$.

We decompose the (new) divisor $\Delta$ (which lies on the new $X'=X)$ as follows:
$$\Delta= \Delta^{\rm vert}+ \Delta^{\rm hor}$$
so that each component of the support of $\Delta^{\rm vert}$ maps via $f$ onto some divisor of $Z$, while the restriction of $f$ to any component of 
the support of $\Delta^{\rm hor}$ is surjective. By the $\Delta$-\emph{neat} condition, only these possibilities can occur.

Notice that, since the 'new' $\Delta(=\Delta')$ on $X'$ differs from the lift of the initial $\Delta$ on $X$ only by components contained in the exceptional locus of $g$, whch does not meet $C=\pi(C')$, we can apply the local computations of proposition \ref{deg} as if we actually had $X'=X$ and $\Delta'=\Delta$.

We now use the notations introduced before the proof of proposition \ref{deg}. By assumption, the quotient $Q_{f,\Delta)}$ has an ample dual over $C'$. In particular, it has negative degree on $C'$. 

By proposition \ref{deg}, this degree is given by: $$(\frac{1}{g.m^{n-1}}).deg_{C'}(Q_{f,\Delta})=[K_{X/Z}+\Delta-D(f,\Delta)].H^{n-1},$$

where $D(f,\Delta):=[\sum_{k=1}^{k=r} (t_{F_k}-\frac{1}{m_{\Delta}(F_k)}).F_k]$. Recall that the sum in $D(f,\Delta)$ bears on the finitely many irreducible divisors $F_k$ of $X$ which are either components of $\Delta^{vert}$, or mapped by $f$ onto divisors of $Z$ with multiplicity $t_k\geq 2$. Also, $m_{\Delta}(F_k)\geq 1$ is the $\Delta$-multiplicity of $F_k$.

 \
 
 Since we assumed $K_X+\Delta$ (and thus also $K_{X'}+\Delta'$) to be pseudo-effective, this negativity contradicts the following result (which thus ends the proof of theorem \ref{tgsp}):

\begin{theorem}\label{corcb} Let $(X, \Delta)$ be a smooth orbifold pair, and let $f:X\to Z$ be a $\Delta$-neat fibration. 
If $K_{X_y}+ \Delta_{\vert X_y}$ is pseudo-effective on the generic fibre $X_y$ of $f$, the $\Q-$bundle $K_{X/Y}+ \Delta-D(f,\Delta)=K_{X/Y}+ \Delta^{hor}-D(f,0)$ is then pseudo-effective, too.

\end{theorem}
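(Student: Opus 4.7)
The plan is to derive Theorem~\ref{corcb} from the orbifold weak positivity theorem for direct images of relative canonical bundles, in the form of \cite{Ca04}, Theorem~11.3 (equivalently extractable from \cite{K} or \cite{BP}). I first verify the asserted identity $K_{X/Z}+\Delta^{\rm hor}-D(f,\Delta)=K_{X/Z}+\Delta-D(f,0)$ by a coefficient check on each $F_k$ appearing in either sum: writing $t_{F_k}-\tfrac{1}{m_\Delta(F_k)} = (t_{F_k}-1) + \delta_{F_k}$ with $\delta_{F_k} = 1-\tfrac{1}{m_\Delta(F_k)}$, one reads off $D(f,\Delta)-D(f,0)=\Delta^{\rm vert}=\Delta-\Delta^{\rm hor}$. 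This reformulation cleanly separates the multiple-fibre correction $D(f,0)$ from the orbifold contribution $\Delta$, and it is the right-hand form one wants to work with.

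The main step is then to absorb $D(f,0)$ via a Kawamata-type base change $v:Z'\to Z$, ramified to orders divisible by every multiplicity $t_{F_k}$ over the corresponding divisor of $Z$. Taking a log-resolution $X'$ of the normalized fibre product and denoting by $u:X'\to X$ and $f':X'\to Z'$ the induced maps, the standard Viehweg cover formula yields, modulo $u$-exceptional divisors,
\[
u^{*}\bigl(K_{X/Z}+\Delta-D(f,0)\bigr)\equiv K_{X'/Z'}+\Delta',
\]
where $\Delta'$ is a log-canonical orbifold divisor on $X'$ whose horizontal part is the strict transform of $\Delta^{\rm hor}$ and $f'$ is $\Delta'$-neat, with $D(f',0)$ supported only on $u$-exceptional components. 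Since pseudo-effectivity descends under $u_*$, it suffices to show that $K_{X'/Z'}+\Delta'$ is pseudo-effective. Here I invoke the orbifold weak positivity theorem applied to $f':(X',\Delta')\to Z'$: the generic fibre of $f'$ is a finite \'etale cover of that of $f$, so the hypothesis that $K_{X_y}+\Delta_{|X_y}$ is pseudo-effective transfers to the generic fibre of $f'$, and \cite{Ca04}, Theorem~11.3, then yields the required pseudo-effectivity of $K_{X'/Z'}+\Delta'$.

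The main obstacle is the bookkeeping inside the base-change step: one must verify that the ramification of $v$ precisely cancels the coefficients $(t_{F_k}-1)$ appearing in $D(f,0)$, that the horizontal part $\Delta^{\rm hor}$ pulls back correctly to a log-canonical orbifold divisor $\Delta'$ on $X'$, and that the $u$-exceptional contributions produced by resolution can indeed be absorbed without affecting pseudo-effectivity. The log-canonical hypothesis on $(X,\Delta)$, inherited from the setting of Theorem~\ref{tgsp}, is essential here — both to ensure that log-canonicity is preserved after base change and resolution, and to stay within the hypotheses under which the orbifold Viehweg--Campana weak positivity applies.
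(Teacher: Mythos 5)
Your proposal follows essentially the same route as the paper. The paper's own argument is a brief sketch: it records the coefficient identity $t-\tfrac{1}{m}=(t-1)+(1-\tfrac{1}{m})$ separating the multiple-fibre contribution $D(f,0)$ from the orbifold contribution $\Delta^{\rm vert}$, notes that the $\Delta=0$ case is the statement of \cite{K} (obtainable also from \cite{BP}), and refers the reader to \cite{Ca04}, Theorem~4.11, for the orbifold weak positivity of the relative canonical bundle, from which the pseudo-effectivity follows by subtracting the correction terms. Your write-up matches this skeleton — coefficient decomposition, then orbifold weak positivity — and adds one explicit intermediate step, namely the Kawamata-type base change $v:Z'\to Z$ that absorbs $D(f,0)$ into the relative canonical bundle of a semistable model. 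That step is implicit in the paper's reference to \cite{K}/\cite{BP} (Kawamata's statement is precisely what you obtain after such a base change), so making it explicit is a useful fleshing out rather than a new argument. Two points you should tighten: (i) after the identity $u^*(K_{X/Z}+\Delta-D(f,0))\equiv K_{X'/Z'}+\Delta'$ modulo $u$-exceptional divisors, you need the exceptional correction to have a sign allowing descent of pseudo-effectivity under $u_*$ — this requires choosing the resolution so that the discrepancies are effective, as in Viehweg's covering construction; and (ii) weak positivity of the direct image $f'_*\cO_{X'}(m(K_{X'/Z'}+\Delta'))$ does not trivially give pseudo-effectivity of the $\Q$-divisor $K_{X'/Z'}+\Delta'$ itself — the passage from one to the other (via the evaluation map, or Kawamata's form of the statement) deserves a sentence. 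Both gaps are at the same level of hand-waving as the paper's own sketch, so the proposal is acceptable as a reconstruction of the intended argument.
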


\noindent {\bf Proof:} The result above is an easy consequence of  \cite{Ca04}, Theorem 4.13, applied to $D:=m.\Delta^{hor}$. This result indeed asserts that $f_*(m(K_{X/Z}+\Delta^{hor}))$ is weakly positive, and $m(K_{X/Z}+\Delta^{hor}))$ is thus pseudo-effective.

The proofs of  \cite{Ca04}, lemma 4.17 and lemma 4.18, applied with $H^{vert}=0$, now shows that this last conclusion is preserved when we substract from $m(K_{X/Z}+\Delta^{hor})$ not only $g^*(\Delta (g,H))$, as stated there, but even $D(f,0)$. Observe indeed that we can write, for any component $F_k$, $g^*(g(F_k))= t_k.F_k$ along its generic point, so the calculations at the end of the proof of \cite{Ca04}, 4.18 give the assertion (just ignore the 7 last lines of the proof of 4.18). The equalities: $t-\frac{1}{m}=(t-1)+(1-\frac{1}{m})=t.(1-\frac{1}{t})+(1-\frac{1}{m})$ finally imply that $D(f,\Delta)=\Delta^{vert}+D(f,0)$. Notice that the multiplicities of $\Delta^{vert}$ here do not play any role, and can be chosen to be rational, not necessarily integral.

 In the special case where $\Delta=0$, it is stated in \cite{K}. The general case can be obtained as well from  \cite{BP}, by an adaptation similar to the one above from \cite{Ca04}. \end{proof}

\subsection{An alternative approach}

\

\

In a forthcoming text \cite{CP14}, we will provide a different proof of Theorem \ref{tgsp} by using 
differential-geometric techniques. Our arguments are based on the existence of K\"ahler metrics with conic singularities and prescribed Ricci curvature; the precise statement is as follows.

\begin{theorem}\label{m-tgsp} Let $(X, \Delta)$ be an orbifold pair, whose 
canonical bundle $K_X+\Delta$ is pseudo-effective. Let $L$ be a line bundle, such that 
$$H^0\big(Y, \otimes ^m\pi^\star T(X, \Delta)\otimes L\big)\neq 0$$
for some $m\geq 1$. Then we have $L\cdot \pi^\star H^{n-1}\geq 0$.
\end{theorem}
\medskip

\noindent It follows that 
the restriction of $\pi^\star\big(\Omega^1(X, \Delta)\big)$ to a 
generic complete intersection is nef, in the sense of algebraic geometry. 
\smallskip
 
\noindent We will present here the main techniques used for proof of Theorem \ref{m-tgsp}.
We denote by $Y_0$ a complex manifold, which is not necessarily compact. The metric $\omega$ with respect to which the next computations are performed is assumed to be K\"ahler. The bundle $L$ 
is endowed with a hermitian metric $h$, and we denote by $\displaystyle 
\tr_\omega\big(\Theta_h(L)\big)$
the trace of the curvature of $L$ with respect to $\omega$. The following Bochner-type formula 
is classical, cf. \cite{BY}. 

\begin{lemma}
\label{lem41}
Let $u$ be a $L$-valued tensor of $(m, 0)$-type on $Y_0$, with compact support. Then we have
\begin{align*}
\int_{Y_0}|\overline\partial(\# u)|^2dV_\omega= & \int_{Y_0}|\overline\partial u|^2dV_\omega
+ \\
+ & \int_{Y_0}\langle \cR(u), u\rangle dV_\omega+ \int_{Y_0}|u|^2\tr_\omega\big(\Theta_h(L)\big)dV_\omega,
\end{align*}
where $\cR$ is an order zero operator, defined as follows. We write 
\begin{equation}
\label{23}
u= \sum_{I}u^I{\partial\over \partial z^{\otimes I}}\otimes e_L
\nonumber
\end{equation}
and then we have

\begin{align*}
\cR(u):= & \sum_{I, p, l}u^IR_{i_p\overline l}
{\partial\over \partial z^{i_1}}\otimes \cdots \otimes
{\partial\over \partial z^{i_{p-1}}}\otimes
{\partial\over \partial z^{l}}\otimes
{\partial\over \partial z^{i_{p+1}}}\otimes \cdots \otimes
{\partial\over \partial z^{i_r}}\otimes e_L
\end{align*}
In the expression above, we use the notation
$$R_{j\overline i}= \sum_p R_{p\overline p j\overline i}$$
for the coefficients of the Ricci tensor (here all the quantities 
are expressed with respect to some geodesic coordinates), as well as 
$I= (i_1,\ldots, i_r)$.  
\end{lemma}

\medskip

\noindent We will use the preceding result as follows. The manifold $Y_0\subset Y$ corresponds to 
the smooth, non-ramified cover of $X\setminus \Supp (\Delta)$.  Let $f$ be any smooth function on 
$Y$; we denote by $h_0$ a reference metric on $L$, and let 
$$h:= \exp(-f)h_0$$
be the twisting of the reference metric on $L$ with the function $f$. The last term 
in the equality of the previous lemma becomes
$$\tr_\omega\big(\Theta_{h_0}(L)\big)+ \Delta_\omega(f)$$
where $\Delta_\omega$ above is the Laplacian operator associated to $\omega$.
As a consequence, the term corresponding to $L$ in Lemma \ref{lem41} becomes
$$\int_{Y_0}\big(\tr_\omega\big(\Theta_h(L)\big)+  \Delta_\omega(f)\big)|u|^2\exp(-f)dV_\omega.
$$
\medskip

Coming back to our problem, if $K_X$ is ample and $\Delta= 0$, then the proof of Theorem \ref{m-tgsp} is as follows. Let 
$\omega\in c_1(H)$ be a representative whose Ricci curvature is definite negative. Such a metric exists as a consequence of the ampleness of $K_X$, thanks to S.-T. Yau theorem, cf. \cite{Yau}. 
We choose $h_0$ in an arbitrary manner, and let 
$$f= \log |u|^2,$$
where $u$ is the $L$-twisted tensor of ($m, 0$)-type given by hypothesis
(and the norm above is induced by $\omega$ and $h_0$). With this choice, we have
$$\int_{X}\big(\tr_\omega\big(\Theta_h(L)\big)+  \Delta_\omega(f)\big)|u|^2\exp(-f)dV_\omega=
L\cdot H^{n-1}
$$
since the $|u|^2$ is cancelled, and the integral of $\Delta(f)$ with respect to 
$dV_\omega$
is equal to zero. The term 
$$\int_{X}\frac{\langle \cR(u), u\rangle }{|u|^2} dV_\omega$$
is negative, by the properties of the Ricci curvature of the metric $\omega$, and $\overline\partial u= 0$, since $u$ is holomorphic. Thus we infer the result.

The general case is much more involved than this, but the techniques needed to carry it on are well-understood. If $\Delta\neq 0$ then we have to use the cut-off procedure and the conic singularities metrics as in 
\cite{CGP}. If $K_X+ \Delta$ is only pseudo-effective rather than ample, then we approximate it 
with a big line bundle, and we use Kodaira lemma i.e. we have 
$$K_X+ \Delta+ \varepsilon H\equiv A_\varepsilon+ E_\varepsilon$$
for each $\varepsilon> 0$, where $A_\varepsilon$ is ample and $E_\varepsilon$
is effective. The bundle $E_\varepsilon$ will induce a further degeneracy in the volume 
element while solving the Monge-Amp\`ere equation. However, the 
estimates we have at our disposal in this framework are solid enough to enable us to 
argue by approximation. As we have already mentioned, the details will appear shortly in 
\cite{CP14}.\qed


\section{Birational stability of the orbifold cotangent bundles}

\noindent We now give a consequence of Theorem \ref{tgsp} (which was its original motivation). 
For similar results, we refer to \cite{Ca09}, and to \cite{CGP}, where transcendental methods are used.
 
 \begin{corollary}\label{c1} Let $(X,\Delta)$ be a log-canonical orbifold pair, with $X$ normal projective and $K_X+\Delta$ pseudo-effective. Let $H$ be any ample line bundle on $X$. Let $\pi:Y\to X$ be a cyclic cover of group $G$ associated to $(X,\Delta)$. Let $H':=\pi^*(H)$. Let $\cF'$ be a rank-one\footnote{The results hold in fact for $det(\cF')$ if $rk(\cF')>1$, with the same proof.} coherent sheaf on $Y$, together with an inclusion $\cF'\subset \otimes^m(\pi^*(\Omega^1(X,\Delta)))$. 
 
 Assume that $(K_X+\Delta).H^{n-1}=0$. Then: 

1. $\cF'.(H')^{n-1}\leq 0$.

2. $h^0(Y,\cF')\leq 1$.

3. More generally\footnote{ The assertions above remain true after lifting $\cF'$ and $H'$ by $\psi^*$, if $\psi:Z\to Y$ is any surjective holomorphic map from an irreducible normal complex space $Z$ to $Y$.}, the evaluation map at a generic point $y\in Y$: $$e_y: H^0(Y,\otimes^m(\pi^*(\Omega^1(X,\Delta))))\to \otimes^m(\pi^*(\Omega^1(X,\Delta)))_y$$ is injective if $(K_X+\Delta).H^{n-1}=0$. \footnote{This is a version of the fact that holomorphic tensors are `parallel' in this situation, a fact proved when $\Delta=0$ in the smooth K\"ahler case by S.T.Yau using Ricci-flat K\"ahler metrics and Bochner formula, and the later in the projective case by Y. Miyaoka using his generic semi-positive theorem just as above}.

Assume that $\Delta=D+\Delta'$, for some $\Bbb Q$-effective, non-zero $D,\Delta'$, and that $\cF'\subset \otimes^m(\pi^*(\Omega^1(X,\Delta')))$. Then:

1'. $\cF'.(H')^{n-1}< 0$.

2'. $h^0(Y,\cF')=0$.

 \end{corollary}
 
 {\bf Proof:} The assertion 2 follows obviously from assertion 1, which we now prove. Let $C'\subset Y$ be a Mehta-Ramanathan curve for $H':=\pi^*(H)$, and $C:=\pi_*(C')$.
 
 Assume first that $(K_X+D).H^{n-1}=0$. Let $Q'$ be the quotient of $\otimes^m(\pi^*(\Omega^1(X,\Delta)))$ by $\cF'$ over $Y$. By theorem \ref{tgsp}, and its corollary \ref{cgsp}, $\det(Q'_{C'})\geq 0$ (since $K_X+\Delta$ is assumed to be pseudo-effective). But $det(Q').C'=-\cF'.C'$, since $(K_X+\Delta).C=0$. Hence the claim.
 
 In the second case, where $(K_X+\Delta').H^{n-1}<0$, the inclusion:
 
  $\cF'\subset \otimes^m(\pi^*(\Omega^1(X,\Delta')))\subset \otimes^m(\pi^*(\Omega^1(X,\Delta)))$ permits to deduce the last two assertions from the preceding ones, since $C$ meets the support of $\Delta'$. $\square$
 
 \begin{rem} The preceding corollary \ref{c1} applies if $(X,\Delta)$ is the image of some smooth orbifold pair $(X",\Delta")$ by a rational birational map $\mu:X"\to X$ whose inverse does not contract any divisor, and with $K_{X"}+\Delta"$ pseudo-effective.
 
 Under the `Abundance conjecture', if $\kappa(X",\Delta")=0$, the property $(K_X+\Delta).H^{n-1}=0$ will be satisfied on any Log-minimal model of $(X",\Delta")$). 
 \end{rem}

  \begin{rem}The second case $(X,\Delta')$ of the preceding corollary arises, for example, when $(X,\Delta')$ is Fano (i.e: has $-(K_X+\Delta')$ is ample), by adding to $\Delta'$ some $D=\frac{1}{N}.E$, where $E$ is a generic member of the linear system $-N.(K_X+\Delta')$.\end{rem}
 
\begin{rem} In these cases, using the invariant $\kappa^{++}$ introduced in \cite{Ca09}, the corollary \ref{c1} shows in particular that $\kappa^{++}(X",\Delta")=0$ (resp. $-\infty$) if $(K_X+\Delta).H^{n-1}=0$ (resp. $(K_X+\Delta').H^{n-1}<0$). \end{rem}

\section{A criterion for orbifold pairs of general type}

The following result\footnote{We give the statement only in its `pure-logarithmic' version, which involves no `orbifold' consideration. But it holds, and the proof given below adapts immediately for general log-canonical pairs $(X,\Delta)$, which are needed in the proof of the `purely logarithmic' case already. In the general case, the assumption is that the inverse image $\pi^*(L)$ of a big line bundle $L$ on $X$ injects in $\otimes^m(\pi^*(\Omega^1(X,\Delta))$ for some cyclic cover $\pi:Y\to X$ associated to $\Delta$.}  was conjectured by 
E. Viehweg in \cite{VZ}.

\begin{theorem}\label{strict} Let $X$ is a projective manifold, and 
$D= \sum_jD_j$ a reduced divisor, such that $(X, D)$ is a smooth `purely-logarithmic' orbifold pair. We assume
the existence of a big line bundle $L$ on $X$, together with an injective sheaf map
$$0\to \cO(L)\to \otimes^m\Omega^1(X, D)\leqno(2)$$
for some integer $m\geq 1$.  Then $K_X+ D$ is big.
\end{theorem}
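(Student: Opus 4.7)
The plan is to argue by contradiction: I suppose $K_X+D$ is not big, and derive a contradiction with the existence of the big subsheaf $\cO(L)\subset\otimes^m\Omega^1_X(\log D)$ by combining Theorem \ref{tgsp} (equivalently, its Corollary \ref{c1}) with [BCHM].

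First I reduce to the case where $K_X+D$ is pseudo-effective. Were it not, Theorem \ref{two} would produce a covering family of rational curves $R\subset X$ with $(K_X+D)\cdot R<0$. Restricting the injection $\cO(L)\hookrightarrow\otimes^m\Omega^1_X(\log D)$ to the normalization $\bP^1\to R$ of a generic member of this family, Grothendieck's splitting writes $\otimes^m\Omega^1_X(\log D)|_{\bP^1}$ as a direct sum of line bundles whose total degree equals $mn^{m-1}(K_X+D)\cdot R<0$. Using the log conormal exact sequence on $R$, one controls the positive summands uniformly across the family in terms of the (bounded) quantities $|D\cap R|$ and the normal bundle type; but $L$ big forces $L\cdot R>0$ to be comparatively large for $R$ in a sufficiently positive movable family, contradicting the bound. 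Hence $K_X+D$ is pseudo-effective.

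Next, assuming $K_X+D$ is pseudo-effective but not big, I invoke [BCHM] (after a standard dlt approximation of $(X,D)$) to run the log MMP, obtaining a log-minimal model $\phi\colon X\dashrightarrow X'$ with $K_{X'}+D'$ nef; by hypothesis $(K_{X'}+D')^n=0$, and the associated Iitaka--Kawamata base-point-free fibration yields $f'\colon X'\to Z$ with $K_{X'}+D'\equiv f'^*A$, $A$ ample on $Z$, $\dim Z<n$. Lifting via a $\Delta$-neat log-resolution $\wt X\to X$ of $f:=f'\circ\phi$, I then construct -- guided by Proposition \ref{deg} and Theorem \ref{corcb} -- an orbifold boundary $\Delta\supseteq D$ on $\wt X$, with coefficients in $[0,1]$ and supported on $D$ together with the $f$-vertical and $\phi$-exceptional divisors (with multiplicities prescribed by the correction term $D(f,D)$ of Proposition \ref{deg}), so that $(\wt X,\Delta)$ is log-canonical, $K_{\wt X}+\Delta$ is pseudo-effective, and $(K_{\wt X}+\Delta)\cdot\wt H^{n-1}=0$ for $\wt H$ the pullback of a suitable ample $H$ on $X$. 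Because $\Delta\ge D$ as orbifold divisors, we have $\Omega^1(\wt X,D)\subseteq\Omega^1(\wt X,\Delta)$, and the embedding of $L$ transports to a rank-one subsheaf $\pi^*L\subset\otimes^m\pi^*\Omega^1(\wt X,\Delta)$ on the cyclic cover $\pi\colon Y\to\wt X$ associated to $\Delta$. Corollary \ref{c1}(1) applied to this setup forces $\pi^*L\cdot(\pi^*\wt H)^{n-1}\le 0$; but $L$ big on $X$ implies $\pi^*L$ is big on $Y$, so $\pi^*L\cdot(\pi^*\wt H)^{n-1}>0$ -- the required contradiction.

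The main obstacle is the construction of $\Delta$: one must simultaneously ensure log-canonicity, preservation of the inclusion $L\subset\otimes^m\pi^*\Omega^1(\wt X,\Delta)$, and the delicate numerical boundary condition $(K_{\wt X}+\Delta)\cdot\wt H^{n-1}=0$. This step is exactly where the orbifold refinement packaged in Corollary \ref{c1} becomes indispensable, since the coarser slope inequality $L\cdot H^{n-1}\le mn^{m-1}(K_X+D)\cdot H^{n-1}$ that follows directly from generic semi-positivity of $\otimes^m\Omega^1_X(\log D)$ only yields pairing positivity of $K_X+D$ against ample classes, not the bigness one seeks.
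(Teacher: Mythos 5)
Your proposal splits into the same two logical halves as the paper's argument, namely pseudo-effectivity of $K_X+D$ and then bigness, but neither half is carried through, and in both places the paper takes a route you have missed.

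\emph{Pseudo-effectivity.} You attempt a direct argument via Theorem \ref{two}: restrict $\cO(L)\hookrightarrow\otimes^m\Omega^1(X,D)$ to a covering family of rational curves $R$ with $(K_X+D)\cdot R<0$ and derive a contradiction from the Grothendieck splitting. The key step --- that the positive summands of $\otimes^m\Omega^1(X,D)|_R$ can be ``controlled uniformly'' so that a big $L$ cannot embed --- is precisely what does not follow: a negative total degree on $\bP^1$ is perfectly compatible with arbitrarily positive individual summands, and neither $|D\cap R|$ nor the normal bundle bounds the positive part of the splitting in the way you need. Remark (0) following the theorem (the example $X=\bP^d\times Z$, $L=f^*(K_Z)$) shows how delicate the role of bigness of $L$ is here. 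The paper avoids this entirely. It proves pseudo-effectivity as its \emph{second} step, by bootstrapping: it first shows (Step~1) that for $0\le t<1$, if $K_X+D+tA$ is pseudo-effective then $K_X+D+tZ$ is big; then, assuming $K_X+D$ not pseudo-effective, it lets $t_0>0$ be the (by [BCHM], rational) infimum of $t$ with $K_X+D+tZ$ pseudo-effective, applies Step~1 at $t_0$ to conclude $K_X+D+t_0Z$ is \emph{big}, and contradicts the minimality of $t_0$. This threshold argument is the missing idea.

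\emph{Bigness from pseudo-effectivity.} You propose running the log-MMP, passing to the Iitaka fibration of a minimal model, and constructing an orbifold boundary $\Delta\ge D$ on a $\Delta$-neat resolution so that $(K_{\wt X}+\Delta)$ is log-canonical, pseudo-effective, supports the inclusion of $L$, and satisfies $(K_{\wt X}+\Delta)\cdot \wt H^{n-1}=0$ in order to invoke Corollary \ref{c1}. You correctly flag this construction as ``the main obstacle'', and it is indeed not carried out; it is unclear that one can force the exact numerical vanishing while simultaneously preserving log-canonicity and the embedding of $L$, and the $\phi$-exceptional and $f$-vertical multiplicities you propose to add are not shown to stay in $[0,1]$. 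The paper does something simpler and more robust: from generic semi-positivity it extracts the slope inequality $L\cdot K_t^{n-1}\le c\,K_t^n$, inserts $L-aA\ge 0$, and then uses the Khovanskii--Teissier inequality $a(A^n)^{1/n}(K_t^n)^{(n-1)/n}\le aA\cdot K_t^{n-1}$ to obtain a volume lower bound $K_t^n\ge (a/c)^n A^n>0$ (with Corollary \ref{c1} ruling out the degenerate case $K_t^n=0$), then reduces to the nef case via [BCHM]. You explicitly note that ``the coarser slope inequality $\dots$ only yields pairing positivity, not bigness'' --- but that is exactly the gap Khovanskii--Teissier closes, and it is the ingredient missing from your proposal.

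In short: the route you propose has two genuine gaps (the uniformity bound on positive splitting pieces, and the construction of $\Delta$ with the required numerical vanishing), and the paper's actual proof replaces both with the $t_0$-threshold argument and the Khovanskii--Teissier volume inequality.
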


\begin{rem} 

{\rm We mention some complements and extensions which can be obtained by similar arguments:

0. When $L$ is not assumed to be big, $K_X+\Delta$ need not be pseudo-effective, even if $L$ is effective, in general (consider $X=\Bbb P^d\times Z$, $Z$ of general type, $f$ the second projection, $\Delta=0$, and $L:=f^*(K_Z)$, with $K_Z$ effective, but not big). The second step of the argument below thus requires the bigness of $L$.

1. The argument proving theorem \ref{strict} can be extended with minor changes, to show that if $L$ and $K_X+\Delta$ are supposed to be pseudo-effective, then $\nu(K_X+\Delta)\geq \nu(L)$, where $\nu$ stands for the numerical dimension .

2. Remark also that, since the tensor product of two line bundles, one big and the other pseudo-effective, is big, the conclusion of the theorem were obvious if one could prove that the quotients of $\Omega^1(X,\Delta)$ have a pseudo-effective (instead of gsp) determinant, under the hypothesis of theorem \ref{tgsp}. This stronger property has been shown in \cite{CPe} when $\Delta=0$ if $X$ is smooth and projective.

3. The initial part of the proof  of theorem \ref{strict} actually applies to give, with an additional nefness assumption, a `distributional' version of Theorem \ref{strict} (see the beginning of the proof of its step 1) :

\begin{theorem}\label{strict'} Let $(X,D)$ be a pair consisting of a complex smooth projective manifold $X$, equipped with a normal crossing (reduced) divisor $D$. Let $Q$ be a torsion free quotient of $\Omega^1_X(Log D)$\footnote{Thus seen as the dual of a saturated subsheaf $\cF$ of $T_X(Log D)$, and $det(Q)=K_{\cF}$.}. Assume that $det(Q)$ is nef, and that there exists an injective sheaf map $L\to \otimes^mQ$ for some $m>0$. Then $det(Q)$ is big.
\end{theorem} 

\

\noindent It might be possible that the result holds more generally if $det(Q)$ is pseudo-effective, but additional arguments concerning Log-minimal models or Zariski decomposition of $det(Q)$ were then needed. On the other hand, the statement does not hold if $det(Q)$ is not pseudo-effective, as the following example shows.

\begin{ex} {\rm 
Let $S$ be a minimal surface of general type, such that we have $13c_1^2> 9c_2$, where 
$c_1$ and $c_2$ are the first and the second Chern class of $S$, respectively. 
Let $X:= {\mathbb P}(T_S)$ be the projectivization of tangent bundle of $S$, and let 
$\Lambda \subset T_X$ be the sub-bundle described by the following relation

$$\Lambda_{(x, [v])}:= \{\xi\in T_{X, (x, [v])}\text{ such that } d\pi (\xi)\in {\mathbb C}v\}$$
where $v\in T_{S, x}$ is a non-zero tangent vector, and $\pi: X\to S$ is the projection map. We remark that $\Lambda$ is not integrable; 
geometrically, it corresponds to the directions of $X$ corresponding to liftings of discs tangent to $S$. We denote by $Q:= \Lambda^\star$,
the dual of $\Lambda$. Then for any ample line bundle $A$ on $X$ there exists an integer $m$ such that we have 
$$H^0\big(X, S^m Q\otimes A^{-1}\big)\neq 0 \leqno{(\dagger)}$$
but the determinant of $Q$ is not even psef, given that its restriction to the fibers of $\pi$ is 
equal to $\mathcal O(-1)$. For a proof of $(\dagger)$ and much more we refer to the article 
\cite{D97}.

\noindent Moreover, we remark that we have
$$H^0\big(X, S^m \Omega^1_X\otimes A^{-1}\big)= 0$$
for any $m\geq 1$, by the same arguments. This may look odd, given $(\dagger)$, yet it is true.}
\end{ex}

\

5.  Actually, the techniques we use in the proof of Theorem \ref{strict} permit to characterize the bigness
of $K_X+ \Delta$, at least in the ``purely logarithmic" case (i.e. $m_j= \infty$). 
We denote by ${E}_{k, m}^{GG}(\Omega^1_{X, \Delta})$ the bundle of logarithmic jet differentials 
or order $k$ and degree $m$. Let $l$ be a positive integer. Then Theorem \ref{strict} admits the following extension and reciprocal version.

\begin{theorem} \label{higher}
The bundle $K_X+ \Delta$ is big if and only if there exist a couple of positive integers $k, m$ together with an injective sheaf map $\displaystyle \cO(L)\to \otimes ^lE_{k, m}^{GG}\Omega^1_{X, \Delta}$ where $L$ is an ample line bundle.
\end{theorem} 

\noindent The ``only if"
part follows from the techniques developed in the article \cite{D} by J.-P. Demailly, and 
the ``if" part is a consequence of Theorem \ref{strict}, as follows.

The logarithmic ``Green-Griffiths" bundle of jet differentials of order $m$ and degree $k$
admits the filtration whose successive quotients are given by
$$S^{m_1}\Omega^1_{X, \Delta}\otimes \dots\otimes S^{m_k}\Omega^1_{X, \Delta}$$
where $\displaystyle (m_j)_{j=1\dots k}$ are positive integers, such that 
$$m_1+2m_2+\dots+ km_k= m.$$
Then we infer that there exists some integer $q$ such that 
$$H^0\big(X, \otimes^{q}\Omega^1_{X, \Delta}\otimes L^{-1}\big)\neq 0$$
and therefore Theorem \ref{higher} is a direct consequence of \ref{strict}.

Also, we mention here that it might be possible to develop the theory of jet differentials in the 
context of general orbifold pairs, and prove a similar result. }

\end{rem}
\smallskip

\begin{proof} (of Theorem 4.1) Let $A$ be a very ample line bundle on $X$ having a section whose zero set $Z$ is smooth and such that $D\cup Z$ is of normal crossings, $A$ being sufficiently multiplied, so that $K_X+D+\frac{1}{2}.A$ is pseudo-effective. Consider the orbifold pair $(X,D+t.Z)$, for $t\geq 0$ rational. The proof consists of two steps: 

\

{\bf Step 1.} $K_X+D+t.Z$ is big if $K_X+D+t.A$ is pseudo-effective, with $0\leq t <1$. We shall prove this after proving the second step.

\

{\bf Step 2.} $K_X+D$ is pseudo-effective. We prove this step 2 now, assuming step 1. Assume, by contradiction, that $K_X+D$ is not pseudo-effective.  Let $\frac{1}{2}\geq t_0>0$ be the smallest of the real numbers $t$ such that $K_X+D+t.Z$ is pseudo-effective. By \cite{BCHM}, $t_0\in \Bbb Q$. By the first step, $K_X+D+t_0.Z$ is big. But this implies that $K_X+D+(t_0-\varepsilon).Z$ is pseudo-effective for some $\varepsilon>0$, contradicting the definition of $t_0$.

\

{\bf Proof of step 1:} We first illustrate the idea in the special case where $K:=K_t:=K_X+D+t.A$ is nef. Let $a>0$ be such that $L\geq a.A$ (i.e: such that the difference $L-a.A$ is $\Bbb Q$-effective), and let $c=c(n,m)>0$ be such that $det(\Omega^1(X,D+t.Z))=c.K_t=c.(K_X+D+t.A)$. We then have (using the fact that $\Omega^1(X,D+t.Z)$ is gsp, thus as well as its tensor powers, and the Khovanskii-Teissier inequalities for the third and first inequalities, respectively): $$a.(A^n)^{\frac{1}{n}}.(K_t^n)^{\frac{n-1}{n}}\leq a.A.K_t^{n-1}\leq L.K_t^{n-1}\leq (c.K_t).K_t^{n-1}=c.K_t^n,$$
from which we deduce that $Vol(K_t)=K_t^n\geq (\frac{a}{c})^n.vol(A)>0$ (in order to divide both sides of the inequality above by $(K_t^n)^{\frac{n-1}{n}}$, which might, a priori, be zero, one just needs to apply the inequality to $t+\varepsilon, \varepsilon >0$ rational, and let $\varepsilon$ tend to zero). This implies that $K_t$ is big.

\

Notice that this special nef case works exactly in the same way if $\Omega^1(X,D+t.Z)$ is replaced by any of its torsionfree quotients $Q$, to give Theorem \ref{strict'}.

\

We then reduce to the case when $K_t:=K_X+D+t.A$ is nef, assuming it to be pseudo-effective, by using \cite{BCHM}. We shall give two proofs of step 1. We consider in both proofs the sequence of klt orbifold divisors $D_{t,k}:=(1-\frac{1}{k.N}).D+t.Z+\frac{1}{kM}.(M.(A+\frac{1}{N}. D))\sim D+(t+\frac{1}{k}).A$, where $k>0$ is an integer, and $N,M$ are chosen such that $M.(A+\frac{1}{N}. D)$ is very ample, and has a section with zero locus $Z'$ such that $D\cup Z\cup Z'$ is of normal crossings. The divisor $D+(t+\frac{1}{k}).A$ is big, and [BCHM06] applies. Here $t\geq 0$ is fixed and $k$ varies.

\

{\bf First proof:} 
By \cite{BCHM}, there exists a composition $\mu:X\dasharrow X'$ of divisorial contractions and flips such that $(X', D_{t,k}':=\mu_*(D_{t,k}))$ is l.c, and $X'$ is $\Bbb Q$-factorial, with $K':=K_{X'}+D'_{t,k}$ nef. Let $L':=\mu_*(L)$: this is a big $\Bbb Q$-Cartier rank one coherent sheaf (well-defined since $\mu^{-1}$ does not contract any divisor). We have for the same reason a natural injection of sheaves $L'\to \mu_*(\otimes^m\Omega^1(X,D))\to (\otimes^m\Omega^1(X',D'))$\footnote{Recall that the cotangent sheaf has been defined by extension from any suitable Zariski open subset with codimension two complement.} if $D':=\mu_*(D)$. For any $t\geq 0, k>0$, we also get (after lifting to a suitable cyclic cover of $X'$) an injection of sheaves $\otimes^m\Omega^1(X',D')\to \otimes^m\Omega^1(X',D'_{t,k})$.

Let $\nu:X"\to X$ be a birational morphism such that $\rho:=\mu\circ \nu:X"\to X'$ is regular, and such that the indeterminacy locus of $\nu^{-1}:X\dasharrow X"$ is included in the indeterminacy locus of $\mu:X\dasharrow X'$. Let $L":=\nu^*(L), A":=\nu^*(A)$. There exists a Zariski-open subset $U'$ of $X'$ with codimension two or more complement which is isomorphic via $\mu$ (resp. $\rho$) to its inverse image $U\subset X$ (resp. $U"\subset X"$). Since $K_{X'}+D'_{t,k}$ is nef, $K'_{\varepsilon}:=K_{X'}+D_{t,k}'+\varepsilon A'$ is ample for any $\varepsilon>0$, rational, and $A'$ an ample line bundle on $X'$. We now consider a curve $C'_{\varepsilon}$ which is a complete intersection of $(n-1)$ generic members of $N'.K'_{\varepsilon}$, $N'$ sufficiently big, in such a way that $C'\subset U'$. Let $\Gamma_{\varepsilon}:=\frac{1}{(N')^{n-1}}.C'$, and $\Gamma_{\varepsilon}"$ be its inverse image in $X"$ by $\rho^*$. 

We thus have: $L'.(K'_{\varepsilon})^{n-1}=L'.\Gamma_{\varepsilon}=L".\Gamma_{\varepsilon}"=L".(\rho(K'_{\varepsilon}))^{n-1}$, for any $\varepsilon>0$.

Now, we can choose a rational effective divisor $\Delta'\cong \varepsilon A'$ on $X'$ such that the pair $(X',D'_{\varepsilon}=D_{t,k}'+\Delta')$ is klt. Since $\Omega^1(X',D'_{\varepsilon})$ is then gsp, and $K_t'$ is nef, we get, by letting $\varepsilon \to 0^+$, since then $K'_{\varepsilon}\to K'_t$: $$a.A".((\rho^*(K_t')^{n-1})\leq L".(\rho^*(K_t')^{n-1})=L'.(K_t')^{n-1}\leq c.K_t'^{n}$$

The crucial point here is that the constants $a$ and $c$ are independent on $t,k$ and $\varepsilon$.

The rest of the proof is then just as in the case where $K_t$ is nef, letting $k\to+\infty$, using the continuity of the volume, and the equality: $vol(K_X+D+(t+\frac{1}{k}).A)=vol(K_{X'}+D'+(t+\frac{1}{k}).A^*)=K'^{n}$ if $A^*:=\mu_*(A)$. This finishes the first proof.

\

{\bf Second proof:} Fix $t,k$ as above. We work with the orbifold divisor $D_{t,k}:=D+\frac{1}{N}.Z$, where $Z$ is a generic member of the linear system $\vert M. A\vert$, such that $M=N.(t+\frac{1}{k}),$ and $N,M$ are sufficiently big integers.

Because $K_X+D+t.A$ is nef, $D+(t+\frac{1}{k}).A$ is big, and $K_X+D_{t,k})$ can be written as a klt divisor, the associated canonical algebra $R_{t,k}$ associated to $K:=K+D_{t,k}$ is finitely generated, after \cite{BCHM}. There thus exists a Zariski decomposition for $K_{t,k}$, that is: a modification $p:X'\to X$ with $X'$ smooth, $p^*(D_{t,k}\cup Exc(p))$ is of simple normal crossings such that $p^*(K)=P+N$, where $P$ is big, without base points, with the same volume $Vol(P)=P^n=vol(K)$, $N$ is effective, and $N.P^{n-1}=0$.

The modification $p$ is a suitable sequence of blow-ups with smooth centers making the ideal $\cI\subset \cO_X$ locally generated by the vanishing loci of a set of generators of the algebra $R_{t,k}$. By \cite{Kol}, we can, moreover, chose this sequence of blow-ups in such a way that, additionally, the support of $K_{X'/X}$ is contained in the inverse image of the cosupport of the preceding ideal $\cI$, where all sections of the generators of $R_{t,k}$ vanish. This property implies that $F.P^{n-1}=0$ for each irreducible component $F$ of $Exc(p)$.

Because $(X,D)$ is log-canonical and $D$ is reduced, we have: $p^*(K_X+D)+E=K_{X'/X}+\ol D+E'$, with $E\cup E'\subset Exc(p)$, $\ol D$ the strict transform of $D$, and $E'$ reduced such that $\ol D+E'\subset p^*(D)$. Thus $p^*$ induces an injection $\Omega^1(X,D)\to \Omega^1(X',D')$, where $D'$ is the reduced part of $p^*(D)$.

The injection
$L\to \otimes^m(\Omega^1(X, D))$ thus also lifts to:
$$p^*(L)\to \otimes^m (\Omega^1(X', \ol D+ \frac{1}{N}.p^*(V))),$$ 
such that $(X', \ol D+ \frac{1}{N}.p^*(V))$ is also log-canonical, by the generic choice of $V$, which permits to impose that $V$ does not contain any component of the cosupport of the ideal $\cI$.

The injection $p^*:\Omega^1(X,D)\to \Omega^1(X',D')$ shows that $K_X'+t.p^*(A)$ is pseudo-effective, and so $\Omega^1(X',D'+(t+\frac{1}{k}).p^*(A)$ is generically semi-positive, so that, putting $K':=K_{X'}+ D' +(t+\frac{1}{k}). p^*(A)$, we get the first inequality below:

$$p^*(L)\cdot P^{n-1}\leq c. K'\cdot P^{n-1}=c.p^*(K).P^{n-1}=c.(P+N).P^{n-1}=c.P^n,$$

the second equality comes from the fact that $F.P^{n-1}=0$ for each component $F$ of $Exc(p)$.

We can now conclude as when $K$ is nef, since $P^n=Vol(K)$. \end{proof}

 From \cite{VZ} (see \cite{Kebekus}, which, among many other things, surveys in a detailed way the problem, the notions involved, and the known special cases) we get:

\begin{corollary} Let $f:X\to B$ be a projective submersion between quasi-projective manifolds $X,B$. Assume that the fibres are (connected) canonically polarized manifolds. If the variation $Var(f)$ of the family is maximal (i.e. equal to $dim(B)$), then $B$ is of log-general type  (i.e: $K_{\bar B}+D$ is big, for any smooth projective compactification $\bar B$ of $B$ with complement $D:=\bar B-B$ a divisor of simple normal crossings on $\bar B)$.
\end{corollary}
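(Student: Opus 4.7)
The plan is to reduce the corollary to the combination of Theorem~\ref{strict} (the Viehweg conjecture, already proved above) with the existence of a \emph{Viehweg--Zuo sheaf}, which is the main construction of \cite{VZ}.

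First I would fix a smooth projective compactification $\bar B$ of $B$ such that $D:=\bar B\setminus B$ is a simple normal crossings divisor; after blowing up, I would also extend $f$ to a projective morphism $\bar f:\bar X\to\bar B$ with $(\bar X,\bar f^{-1}(D))$ log-smooth, so that $\bar f$ restricted to $B$ is the original family of canonically polarized manifolds. Because the log-general-type property of $(\bar B,D)$ does not depend on the choice of compactification (it is a birational invariant of the pair), it is enough to prove that $K_{\bar B}+D$ is big for this particular $(\bar B,D)$.

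Second, I would invoke the result of Viehweg--Zuo \cite{VZ}: under the hypothesis that $f$ is a smooth family of canonically polarized manifolds over $B$ with maximal variation $\mathrm{Var}(f)=\dim B$, one obtains, after a suitable modification of $\bar B$ (absorbable into the previous step), a positive integer $m\geq 1$, a big line bundle $L$ on $\bar B$, and an injection
$$0\to \cO(L)\to \Sym^m\Omega^1_{\bar B}(\log D)\subset \otimes^m\Omega^1_{\bar B}(\log D)=\otimes^m\Omega^1(\bar B,D).$$
This is the technical heart of \cite{VZ}, obtained through Hodge-theoretic positivity of pushforwards of powers of the relative dualizing sheaf and the construction of the comparison map from a suitable Higgs subsheaf.

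Third, and this is the point where the present paper contributes, I would apply Theorem~\ref{strict} to the smooth purely-logarithmic orbifold pair $(\bar B,D)$ together with the big line bundle $L$ and the injection produced by \cite{VZ}. The conclusion is that $K_{\bar B}+D$ is big, which by definition means that $B$ is of log-general type.

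The main obstacle is genuinely not in the present argument itself, since it is a two-line assembly of cited results: the \emph{real} work lies in \cite{VZ} (existence of the big Viehweg--Zuo subsheaf inside $\Sym^m\Omega^1_{\bar B}(\log D)$) and in Theorem~\ref{strict} above (bigness of $K_{\bar B}+D$ from such an injection). Accordingly, the only thing one must be careful about when writing up the corollary is to match the hypotheses of Theorem~\ref{strict}: namely to arrange the compactification so that $(\bar B,D)$ is log-smooth and reduced, and to verify that the sheaf produced by \cite{VZ} indeed sits inside $\otimes^m\Omega^1(\bar B,D)$ in the sense used in this paper, which is immediate since for reduced $D$ the orbifold cotangent sheaf coincides with the classical log cotangent bundle $\Omega^1_{\bar B}(\log D)$.
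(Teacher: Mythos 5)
Your proposal is correct and follows exactly the same route as the paper, which simply combines the Viehweg--Zuo construction of a big subsheaf of $\otimes^m\Omega^1_{\bar B}(\log D)$ from \cite{VZ} with Theorem~\ref{strict}. The only caveat you already note—that the orbifold cotangent sheaf for a reduced boundary $D$ coincides with $\Omega^1_{\bar B}(\log D)$—is indeed all one needs to make the hypotheses of Theorem~\ref{strict} match.
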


The two main cases known before were \cite{KK} (the three-dimensional case), and \cite{Pat} (the case where $B$ is either compact, or admits a non-uniruled compactification). The solution of \cite{KK} rests on the knowledge of the abundance conjecture in dimension $3$, while the solution of \cite{Pat} rests on the main result of \cite{CPe}. The surface case is treated by different methods in \cite{KK'}.

\begin{rem}\label{ric} A stronger statement, called the `isotriviality conjecture', stated in \cite{Ca09}, asserts that a family of canonically polarized manifolds $f:X\to B$ as above is isotrivial if $B$ is `special', an algebro-geometric notion introduced in \cite{Ca07}. Specialness roughly means `opposite' to (Log)-general type. This stronger statement is actually the exact higher-dimensional formulation of the original conjecture of Shafarevich (proved by A. Parshin in \cite{pars}), once `special' quasi-projective manifolds are seen as the higher-dimensional versions of non-hyperbolic quasi-projective curves.The methods of the present paper might permit to attack this stronger conjecture by using the refinement of \cite{VZ} given in \cite{Jabkeb}, asserting that the `Viehweg-Zuo sheaf' comes from the moduli stack. This conjecture is established in \cite{JK} in dimensions at most $3$.
\end{rem}

\noindent {\bf Acknowledgements.} It is our 
pleasure to thank Mircea Musta\c t\u a who patiently explained to  
us many relevant facts concerning the 
algorithm of desingularization of algebraic varieties used in the second proof of the first step in the proof of theorem \ref{strict}.

\noindent {\bf Note added in proof.} The `isotriviality conjecture' mentioned in the above remark \ref{ric} has inbetween been proved by B. Taji in \cite{T} using the approach suggested there.

\end{document}